%% file: paper2.tex
\documentclass[11pt, oneside]{amsart}   	
\usepackage{geometry}                		
\usepackage{graphicx}				
\usepackage{cancel}

\usepackage{float}
\usepackage{xcolor}
\usepackage{tikz}
\usetikzlibrary{calc}
\usepackage{caption}
\usepackage{booktabs}
\usepackage{varwidth}

\usepackage{amsthm}
\theoremstyle{plain}
\newtheorem{theorem}{Theorem}[section]   

\newtheorem{lemma}[theorem]{Lemma}
\newtheorem{corollary}[theorem]{Corollary}

\newtheorem{prop}[theorem]{Proposition}   

\theoremstyle{definition}                 
\newtheorem{definition}[theorem]{Definition}

\newtheorem{remark}[theorem]{Remark}      

\numberwithin{equation}{section}

\usepackage{amsmath}
\usepackage{mathtools}
\usepackage{amssymb}

\usepackage[foot]{amsaddr}
\usepackage{commath}
\usepackage{bigints}
\usepackage{romanbar}
\usepackage[ruled,vlined]{algorithm2e}
\usepackage{enumitem}
\setlist{leftmargin=*}

\usepackage{hyperref}
\title[Unconstrained Gradient Flow]{Unconstrained Scheme for Geometrically Constrained Gradient Flows}
\author{S\"oren Bartels}\thanks{Department of Applied Mathematics, University of Freiburg, Hermann--Herder--Str.\ 10, 79104 Freiburg, Germany. \textit{Email:} \texttt{bartels@mathematik.uni-freiburg.de}}

\author{Lucas Bouck}\thanks{Department of Mathematical Sciences, Carnegie Mellon University, 5000 Forbes Ave, Pittsburgh, PA 15213, USA. \textit{Email:} \texttt{lbouck@andrew.cmu.edu}}

\author{Christian Palus}\thanks{Department of Applied Mathematics, University of Freiburg, Hermann--Herder--Str.\ 10, 79104 Freiburg, Germany. \textit{Email:} \texttt{christian.palus@mathematik.uni-freiburg.de}}

\begin{document}

\begin{abstract}
In this paper, we study the approximation of gradient flows of harmonic maps, which serve as model problems for applications in micromagnetics, liquid crystals, and nonlinear plate bending. Harmonic maps are vector fields that are critical points of the Dirichlet energy subject to the constraint that the vector field be unit length pointwise. Most existing time-stepping schemes for gradient flows deal with the constraint by linearizing the unit length constraint at every step, which involves solving for the solution increment in the tangent space of the constraint. These schemes lead to robust control over the violation of the constraint, but require solving degenerate saddle point systems at every step that may be difficult to precondition.  In this paper, we propose a scheme that first computes the unconstrained increment and then projects this increment pointwise onto the tangent space. With an additional stabilization, this scheme is energy stable under mild step size restrictions and provides robust control of the unit length constraint violation. Our new scheme only requires the solution of decoupled symmetric positive definite systems at every step, which translates to a large increase in computational efficiency. We also propose a computable a posteriori criterion and a variable time-stepping procedure that guarantee the stability of the scheme. We conclude with computational examples demonstrating the efficacy of the scheme, and present a computational extension of the scheme to nonlinear plate bending.
\end{abstract}

\maketitle

\section{Introduction}
Minimization problems and gradient flows with geometric or other nonlinear pointwise constraints arise in numerous settings such as liquid crystals \cite{frank1958liquid}, micromagnetics \cite{gilbert2004phenomenological}, and plate bending \cite{friesecke2002theorem}. Beginning with Alouges \cite{alouges1997new}, a successful approach to discretizing such equations linearizes the geometric constraint at every step of the gradient flow by restricting the discrete velocity to live in the tangent space to the constraint. After this step, the predicted solution no longer satisfies the nonlinear geometric constraint, and the algorithm in \cite{alouges1997new} projects the solution back onto the constraint. This projection step is stable at the continuous level for the Dirichlet energy, but it may lead to a lack of energy stability when discretizing with finite elements as explored in the work by the first author in \cite{bartels2005stability}. Additionally, it was proved in \cite{bartels2005stability} that a weakly acute mesh is a sufficient condition for energy stability of the nonlinear projection step.

To bypass energy stability issues with the nonlinear projection of \cite{alouges1997new}, the work of the first author in \cite{bartels2016projection} proposed foregoing the nonlinear projection step entirely, hence the name ``projection-free scheme.'' While the solution no longer satisfies the geometric constraint exactly, the energy stability of the scheme allows \cite{bartels2016projection} to prove an error estimate of the constraint violation in terms of the step size. When the solution is smoother, one can prove error estimates of the projection-free scheme \cite{bartels2024error}. Projection-free schemes have also been extended to energy minimization methods in liquid crystals \cite{nochetto2022gamma, bouck2024projection}, bending isometries \cite{bartels2013bending}, bilayer plates \cite{bartels2022stable}, and accelerated schemes for such problems \cite{dong2025accelerated}.

One drawback of the current suite of projection-free methods is that the linearized system at every step is a saddle point problem. The inf-sup stability of this saddle point problem \cite{bartels2022quasi,hu2009saddle} may depend on higher regularity of the solution and can degenerate in the presence of defects in the solution. While preconditioners for these types of saddle point problems have been developed for liquid crystals \cite{xia2021augmented} and micromagnetics \cite{kraus2019iterative}, preconditioning fourth order problems in plate bending remains a challenge.

 In this paper, we propose a method that only requires solving symmetric positive definite problems at every step. The key idea from the PhD dissertation of the third author \cite[Chapter 5]{palus2024finite} is to split the scheme into two steps. The first step is an unconstrained backward Euler step with an explicit treatment of some nonlinearities depending on the projection operator onto the linear tangent space. The second step is to project the velocity onto the tangent space with an inexpensive pointwise operation. This scheme enables us to avoid computing vectorial saddle point problems and compute only with symmetric positive definite problems. In our computational examples, we show an approximately 4$\times$ to 7$\times$ speed-up over the usual projection-free algorithm for computing harmonic map heat flow and a speed-up of more than $13\times$ in computing critical points of a plate bending problem with an $H^2$ gradient flow compared to previous schemes. The main contributions of this paper are: (i) a complete stability and convergence analysis of the method for harmonic map heat flow and computation of harmonic maps; and (ii) a computational extension to a plate bending problem. To prove stability of our new scheme for harmonic map heat flow, we add a stabilization to the scheme inspired by augmented Lagrangian preconditioners for the Oseen--Frank problem \cite{xia2021augmented}.

 The outline of this paper goes as follows. In Subsections \ref{sec:intro}--\ref{sec:intro-unconstrained-gf-min}, we give an overview of the problem setting, review the projection-free scheme of \cite{bartels2016projection}, and outline the main ideas of the new scheme studied in this paper. In Section \ref{sec:prelim}, we review necessary preliminaries. In Section \ref{sec:harmonic-maps} and Section \ref{sec:harmonic-map-heat-flow}, we prove convergence of our scheme to harmonic maps and convergence of our scheme to solutions of harmonic map heat flow, respectively. Finally, we conclude with computational examples in Section \ref{sec:computations} and a demonstration of our algorithm applied to plate bending problems in Section \ref{sec:plate-bending}.

 Before diving into a more detailed introduction of the method, we must introduce some notation. For a bounded domain $\Omega\subset \mathbb{R}^\ell$, with functions $u,v\in L^2(\Omega; \mathbb{R}^d)$, we denote the $L^2(\Omega; \mathbb{R}^d)$ inner product of $u,v$ by $(u,v)$ and the $L^2(\Omega; \mathbb{R}^d)$ norm of $u$ by $\Vert u\Vert = \sqrt{(u,u)}$. When we need to specify other norms such as other Lebesgue or Sobolev norms, we will abuse notation and only use $L^p$ as the subscript to denote the space when the domain of integration is clearly $\Omega$, i.e.\ $\Vert u\Vert_{L^p}$. If we need to consider a Lebesgue norm whose integral is over a subdomain $S\subset \Omega$, we will specify the domain, i.e.\ $\Vert u\Vert_{L^p(S)}$. Another abuse of notation we will employ throughout this paper is to not specify the target space in the notation for a function space unless necessary. For example, if we have $u:\Omega\to \mathbb{R}^d$, we will use $L^p(\Omega)$ in place of $L^p(\Omega; \mathbb{R}^d)$ for $u$ and $L^p(\Omega)$ in place of $L^p(\Omega; \mathbb{R}^{d\times \ell})$ for $\nabla u$. When we study a time dependent problem on time interval $(0,T)$, we will denote Bochner spaces by $L^p((0,T); W^{1,q}(\Omega))$. The letter $C$ throughout this paper will be a generic constant that will be independent of meshsize $h$, and $C$ may change from line to line. Finally, we also use $a\lesssim b$ to mean that there is a $C>0$ such that $a\leq C b$.
\subsection{Harmonic maps and harmonic map heat flow} \label{sec:intro}
This paper studies the computation of critical points and gradient flows of the Dirichlet energy on the bounded domain $\Omega \subset \mathbb{R}^\ell$:
$$
E[u] = \int_\Omega |\nabla u|^2 dx
$$
subject to the constraint
$$
u\in \mathcal{A} := \{ u\in H^1(\Omega): u =g \text{ on } \Gamma_\mathrm{D} \text{ and } |u|^2 = 1 \text{ a.e.}\}.
$$
The Euler--Lagrange equation of $E$ over the admissible set $\mathcal{A}$ is
\begin{equation}\label{eq:harmonic-maps}
(\nabla u,\nabla w) = 0
\end{equation}
for all $w\in T\mathcal{A}(u) := \{ v\in H^1_\mathrm{D}(\Omega):  v\cdot u = 0 \text{ a.e.}\}$, where $H^1_\mathrm{D}(\Omega)$ is the space of $H^1$ functions whose trace vanishes on $\Gamma_\mathrm{D}$.
The solutions to \eqref{eq:harmonic-maps} are known as {\it harmonic maps}. To compute critical points of $E$, we use gradient flows of $E$. Let $(\cdot,\cdot)_*$ be an inner product on the space $H^1_\mathrm{D}(\Omega)$ where there is a $C>0$ such that the norm induced by $(\cdot,\cdot)_*$ satisfies $\Vert w\Vert \leq C \Vert w\Vert_*$ for all $w\in H^1_\mathrm{D}(\Omega)$. Formally, the $(\cdot,\cdot)_*$ gradient flow of $E$ is to find a $u\in L^\infty((0,T); H^1(\Omega))\cap H^1((0,T); L^2(\Omega))$ that satisfies
\begin{equation}\label{eq:star-gradient-flow}
(\dot{u}, w)_* + (\nabla u, \nabla w) = 0 \text{ a.e.\ in } (0,T),
\end{equation}
for all test functions $w\in L^\infty((0,T); H^1_{\mathrm{D}}(\Omega))$ that satisfy $w(t) \in T\mathcal{A}(u(t))$ a.e.\ $t\in (0,T)$. If $(\cdot,\cdot)_*$ is the $L^2$ inner product, then we have {\it harmonic map heat flow}
\begin{equation} \label{eq:intro-heat-flow}
(\dot{u}, w) + (\nabla u, \nabla w) = 0 \text{ a.e.\ in } (0,T).
\end{equation}

\subsection{Projection-free scheme for harmonic map gradient flow}\label{sec:intro-proj-free}
The projection-free scheme \cite{bartels2016projection} discretizes the harmonic map gradient flow by linearizing the constraint $|u|^2 = 1$ at every step.
Given $u^k$ and time step $\tau_k$, the scheme computes the discrete velocity $d_t u^{k+1}:= \frac{u^{k+1} - u^k}{\tau_k}$ to satisfy $d_t u^{k+1} \in T\mathcal{A}(u^k)$ and
$$
(d_t u^{k+1}, w)_*+ \tau_k (\nabla  d_t u^{k+1}, \nabla w) = -(\nabla u^k, \nabla w)
$$
for all $w\in T\mathcal{A}(u^k)$. In order to practically solve this problem, one would either need to build a basis for the tangent space $T\mathcal{A}(u^k)$ or introduce a Lagrange multiplier $\lambda^{k+1}$ such that $(d_tu^{k+1}, \lambda^{k+1})$ solves
\begin{equation}\label{eq:saddle-point-problem}
\begin{aligned}
(d_t u^{k+1}, w)_*+ \tau_k (\nabla  d_t u^{k+1}, \nabla w) + (\lambda^{k+1}, u^k\cdot w) &= -(\nabla u^k, \nabla w)\\
(d_tu^{k+1}\cdot u^k, \rho) &= 0
\end{aligned}
\end{equation}
for all $(w,\rho)\in H^1_\mathrm{D}(\Omega;\mathbb{R}^d)\times L^2(\Omega)$. Solving the above vectorial saddle point problem can become difficult, especially in 3 spatial dimensions. The difficulty of these saddle point problems becomes more pronounced for fourth order plate bending problems.
\subsection{Unconstrained gradient flow for harmonic map heat flow} \label{sec:intro-heat-flow}
In order to avoid the potentially degenerate saddle point structure of the projection-free scheme, we propose a new scheme that splits the computation of the velocity and its projection into two steps. Instead of testing with functions $w\in T\mathcal{A}(u)$, we test the harmonic map heat flow with functions of the form $P_u w$, where $P_u = I - \tilde{u}\otimes \tilde{u}$, with $\tilde{u} = u/|u|$, is the projection of a vector onto the tangent space $T\mathcal{A}(u)$. The equivalent weak form of harmonic map heat flow from \eqref{eq:intro-heat-flow} is
\begin{equation*} 
(\dot{u}, P_u w) + (\nabla u, \nabla P_u w) = 0 \text{ a.e. $t\in (0,T)$}
\end{equation*}
for all $w\in H^1_\mathrm{D}(\Omega)$.
By expanding $\nabla P_u w$ and using $(\dot{u}, P_u w) = (\dot{u}, w)$ for the $L^2$ flow, the new formulation looks like a heat equation with an additional nonlinearity:
$$
(\dot{u}, w) + (\nabla u, \nabla w) = (\nabla u, \nabla (I - P_u)w).
$$
The idea from \cite{palus2024finite} we explore in this paper uses the above formulation and splits the harmonic map heat flow into two steps. The first step is an unconstrained problem to compute a predicted velocity $v^{k+1}$ with an implicit treatment of the linear part and explicit treatment of the nonlinearity:
\begin{equation}\label{eq:v-eq}
(v^{k+1}, w)+ \tau_k (\nabla  v^{k+1}, \nabla w) = -(\nabla u^k, \nabla w) +(\nabla u^k, \nabla (I - P_{u^k})w) = -(\nabla u^k, \nabla P_{u^k}w).
\end{equation}
The predicted velocity $v^{k+1}$ from \eqref{eq:v-eq} may not be an element of the tangent space $T\mathcal{A}(u^k)$ due to the explicit treatment of the nonlinearity. The second step of the scheme remedies this error and projects $v^{k+1}$ onto the tangent space
\begin{equation}\label{eq:projection}
d_t u^{k+1} = P_{u^k}v^{k+1}.
\end{equation}

The advantage of this scheme is that instead of solving a vectorial saddle point problem in \eqref{eq:saddle-point-problem}, we now only need to solve decoupled scalar SPD problems in \eqref{eq:v-eq}. The ease of solving \eqref{eq:v-eq} comes at the moderate cost of a stability condition.

The stability of this scheme is closely related to the stability of the tangent space projection $P_{{u}^k}$. An initial estimate follows from testing \eqref{eq:v-eq} with $v^{k+1}$ and using \eqref{eq:projection}:
$$
\Vert v^{k+1}\Vert^2  +\tau_k \Vert \nabla  v^{k+1}\Vert^2 =-(\nabla u^k, \nabla P_{{u}^k} v^{k+1}) = -(\nabla u^k, \nabla d_t u^{k+1}).
$$
A simple quadratic identity $(a, a-b) = \frac{1}{2} \Vert a\Vert^2 - \frac{1}{2} \Vert b\Vert^2 + \frac{1}{2} \Vert a-b\Vert^2$ with the above relation yields:
\begin{align*}
\Vert v^{k+1}\Vert^2  +\tau_k \Vert \nabla  v^{k+1}\Vert^2 &=  \frac{1}{\tau_k}(\nabla u^k, \nabla u^k -\nabla u^{k+1} ) 
 = -\frac{1}{2} d_t  \Vert \nabla u^{k+1}\Vert^2 + \frac{\tau_k}{2}\Vert \nabla d_tu^{k+1}\Vert^2
\end{align*}
Rearranging the above equality yields the typical energy identity for the heat flow
\begin{equation}\label{eq:energy-identity-heat}
\frac{1}{2} d_t  \Vert \nabla u^{k+1}\Vert^2+ \Vert v^{k+1}\Vert^2   + \tau_k \Vert \nabla  v^{k+1}\Vert^2 =  \frac{\tau_k}{2}  \Vert \nabla  d_t u^{k+1}\Vert^2.
\end{equation}
Recall from \eqref{eq:projection} that  $d_t u^{k+1} = P_{{u}^k} v^{k+1}$. If the projection $P_{u^k}$ is stable in the sense that
\begin{equation}\label{eq:operator-stability}
\frac{1}{2} \Vert \nabla P_{{u}^k} v^{k+1}\Vert^2\leq \frac{1}{ 2\tau_k} \Vert v^{k+1}\Vert^2 + \Vert \nabla  v^{k+1}\Vert^2,
\end{equation}
then an energy stability bound follows:
\begin{equation}\label{eq:basic-energy-stab-intro}
\frac{1}{2} d_t  \Vert \nabla u^{k+1}\Vert^2 +\frac{1}{2} \Vert v^{k+1}\Vert^2\leq 0 .
\end{equation}
The factor $1/2$ in \eqref{eq:basic-energy-stab-intro} is not quite the correct factor to get the right energy stability bound for harmonic map heat flow, but one can replace $1/2$ with $1/2 - \varepsilon$ resulting in a different stability criterion \eqref{eq:operator-stability}. 

From the energy stability in \eqref{eq:basic-energy-stab-intro} and the orthogonality relation $d_tu^{k+1}\cdot u^k = 0$, the unit length constraint violation is controlled:
\begin{align*}
\Vert |u^{k+1}|^2 - 1\Vert_{L^1(\Omega)} &= \Vert |u^{k}|^2 - 1\Vert_{L^1(\Omega)} + \tau_k^2 \Vert d_tu^{k+1} \Vert^2 \leq \Vert |u^{k}|^2 - 1\Vert_{L^1(\Omega)} + C \tau_k^2 \Vert v^{k+1} \Vert^2\\
&\leq \Vert |u^{k}|^2 - 1\Vert_{L^1(\Omega)} + C \tau_k \left(\Vert \nabla u^{k}\Vert^2 - \Vert \nabla u^{k+1}\Vert^2\right).
\end{align*}
The above inequality telescopes when summing $k=0,\ldots, K$, and if $|u^0| = 1$ a.e., we have the usual control of the unit length constraint violation
$$
\Vert |u^{K+1}|^2 - 1\Vert_{L^1(\Omega)}\leq C\, (\sup_{0\leq k\leq K} \tau_k )\, \Vert \nabla u^{0}\Vert^2.
$$

The key here is that the stability constant of the projection operator $P_{{u}^k}$ is small enough so that \eqref{eq:operator-stability} is valid. When the scheme is implemented using finite element methods, \eqref{eq:operator-stability} is satisfied for $\tau_k$ sufficiently small. The condition on $\tau_k$ is computable and amenable to analysis using inverse inequalities. Indeed, if $v_h^{k+1}, P_{u^k_h}v_h^{k+1}$ are finite element functions, one can use a global inverse inequality and the fact that $P_{u^k_h}$ is a projection to show
$$
\frac{1}{2} \Vert \nabla P_{u^k_h} v_h^{k+1}\Vert^2 \leq Ch^{-2} \Vert P_{u^k_h} v_h^{k+1}\Vert^2 \leq Ch^{-2} \Vert v_h^{k+1}\Vert^2.
$$
From the above inequality, a sufficient condition to satisfy \eqref{eq:operator-stability} is
\begin{equation*}
\frac{C}{2h^2} \Vert v_h\Vert^2\leq \frac{1}{ 2\tau_k} \Vert v_h\Vert^2,
\end{equation*}
which would imply a restrictive stability condition on par with an explicit method: $\tau_k \leq C h^{2}$. To fix this issue, we insert the stabilization $(\tilde{u}^k\cdot v^{k+1},\tilde{u}^k\cdot w)$ with a penalty parameter inside \eqref{eq:v-eq} to write a new scheme
\begin{equation}\label{eq:v-eq-stab}
(v^{k+1}, w)+ \tau_k (\nabla  v^{k+1}, \nabla w) +\gamma (\tilde{u}^k\cdot v^{k+1},\tilde{u}^k\cdot w)=  -(\nabla u^k, \nabla P_{u^k}w).
\end{equation}
In the limit as $\tau_k\to0$, we expect that $\tilde{u}^k\cdot v^{k+1}\to 0$, but when $\tilde{u}^k\cdot v^{k+1}\neq0$, the stabilization creates a stronger norm to improve the coercivity of the problem and control the projection $P_{u^k}$. These types of stabilizations are not new and have been previously used in the context of augmented Lagrangian preconditioners for the Oseen--Frank problem \cite{xia2021augmented}.
To get an improved stability estimate with the modification, test with $v^{k+1}$ in \eqref{eq:v-eq-stab} and get an identity similar to \eqref{eq:energy-identity-heat}:
$$
\frac{1}{2} d_t  \Vert \nabla u^{k+1}\Vert^2  + \gamma\Vert \tilde{u}^k\cdot v^{k+1}\Vert^2 + \Vert v^{k+1}\Vert^2   + \tau_k \Vert \nabla  v^{k+1}\Vert^2 = \frac{\tau_k}{2}  \Vert \nabla  d_t u^{k+1}\Vert^2
$$
The above energy identity with $\gamma>0$ leads to a new stability criterion
\begin{equation}\label{eq:operator-stability-stab}
\frac{1}{2} \Vert \nabla P_{{u}^k} v^{k+1}\Vert^2\leq \frac{\gamma}{ \tau_k} \Vert \tilde{u}^k\cdot v^{k+1}\Vert^2+ \Vert \nabla  v^{k+1}\Vert^2.
\end{equation}
If  \eqref{eq:operator-stability-stab} is satisfied, there is an improved energy estimate with the correct dissipation
$$
\frac{1}{2} d_t  \Vert \nabla u^{k+1}\Vert^2 + \Vert v^{k+1}\Vert^2 \leq 0 .
$$

In a finite element implementation, scaling $\gamma$ with a negative power of $h$ leads to improved stability conditions on $\tau_k$. Splitting $P_{{u}_h^k} v_h^{k+1} = v_h^{k+1} - \tilde{u}_h^k (\tilde{u}_h^k\cdot v_h^{k+1})$ and using a global inverse inequality yields
\begin{align*}
\frac{1}{2} \Vert \nabla P_{{u}^k_h} v^{k+1}_h\Vert^2 \leq \Vert \nabla  v^{k+1}_h\Vert^2 + \Vert \nabla (\tilde{u}_h^k (\tilde{u}_h^k\cdot v_h^{k+1}))\Vert^2\leq \Vert \nabla  v^{k+1}_h\Vert^2 + C h^{-2} \Vert  \tilde{u}_h^k\cdot v_h^{k+1}\Vert^2.
\end{align*}
Hence, \eqref{eq:operator-stability-stab} is satisfied when
$
\tau_k\leq C \gamma h^2,
$
and scaling $\gamma\approx h^{-1}$ leads to a mild stability condition $\tau_k \leq C h$, at the expense of coupled components in the linear solve.

\subsection{Unconstrained gradient flows for energy minimization} \label{sec:intro-unconstrained-gf-min}
While this scheme is best justified for $L^2$ flows, we can apply this idea to design energy stable schemes in more general Hilbert spaces for the purposes of computing harmonic maps. For gradient flows with the inner product $(\cdot, \cdot)_*$, we propose the same scheme as in the $L^2$ case from \eqref{eq:v-eq}:
\begin{equation}\label{eq:star-flow-intro}
(v^{k+1}, w)_*+ \tau_k (\nabla  v^{k+1}, \nabla w) = -(\nabla u^k, \nabla P_{u^k}w),
\end{equation}
which may also be interpreted as a fixed point iteration for computing the optimality condition \eqref{eq:harmonic-maps}.

Testing with $v^{k+1}$ and repeating the arguments from the harmonic map heat flow leads to an analogous identity to \eqref{eq:energy-identity-heat}
$$
\frac{1}{2} d_t  \Vert \nabla u^{k+1}\Vert^2 = - \Vert v^{k+1}\Vert_*^2   - \tau_k \Vert \nabla  v^{k+1}\Vert^2 + \frac{\tau_k}{2}  \Vert \nabla  P_{u^k} v^{k+1}\Vert^2.
$$
If we require
\begin{equation}\label{eq:operator-stability-star-flow}
\frac{1}{2} \Vert \nabla P_{{u}^k} v^{k+1}\Vert^2\leq \frac{1}{ 2\tau_k} \Vert v^{k+1}\Vert_*^2 + \Vert \nabla  v^{k+1}\Vert^2,
\end{equation}
then the scheme satisfies an energy bound of the form
$$
\frac{1}{2} d_t  \Vert \nabla u^{k+1}\Vert^2 + \frac{1}{2} \Vert v^{k+1}\Vert_*^2 \leq 0.
$$
By choosing a stronger norm for the gradient flow, we can achieve a mild stability condition on the time step $\tau_k$ with no stabilization, i.e.\ we can set $\gamma=0$. An advantage of having no stabilization is that \eqref{eq:star-flow-intro} can be solved by solving $d$ decoupled scalar SPD problems, which further enhances the efficiency of the scheme.

To see a stability condition on $\tau_k$ in a finite element implementation, consider the case of $(v,w)_* = (\nabla v, \nabla w)$. Applying the definition $P_u = I - \tilde{u}\otimes\tilde{u}$, product rule $\partial_i(\tilde{u}_h^k (\tilde{u}_h^k\cdot v_h^{k+1})) = \partial_i\tilde{u}_h^k (\tilde{u}_h^k\cdot v_h^{k+1})+ \tilde{u}_h^k(\partial_i\tilde{u}_h^k\cdot v_h^{k+1} +\tilde{u}_h^k\cdot \partial_iv_h^{k+1})$, and H\"older's inequality leads to a bound on the LHS of \eqref{eq:operator-stability-star-flow}:
$$
 \Vert \nabla P_{{u}^k} v_h^{k+1}\Vert \leq \Vert \nabla v_h^{k+1}\Vert + 2\Vert \nabla \tilde{u}_h^k\Vert \Vert \tilde{u}_h^k\Vert_{L^\infty} \Vert v_h^{k+1}\Vert_{L^\infty} + \Vert \tilde{u}_h^k\Vert_{L^\infty}^2 \Vert \nabla v_h^{k+1}\Vert.
$$
To complete the argument, we introduce $\rho_{\mathrm{inv}} = C|\log h|^{1/2}$ for $\ell=2$ and $\rho_{\mathrm{inv}} = Ch^{1-\ell/2}$ for $\ell \geq3$. The global inverse inequality to control the $L^\infty$ norm with the $H^1$ norm is $\Vert v_h^{k+1}\Vert_{L^\infty}\leq  \rho_{\mathrm{inv}} \Vert \nabla v_h^{k+1}\Vert$. This inverse inequality and $|\tilde{u}_h^k| = 1$ lead to a bound with an explicit $h$ dependence
$$
 \Vert \nabla P_{{u}^k} v_h^{k+1}\Vert \leq  C\left( 1+ \rho_{\mathrm{inv}}  \Vert \nabla \tilde{u}_h^k\Vert\right) \Vert \nabla v_h^{k+1}\Vert.
$$
Hence, the stability bound \eqref{eq:operator-stability-star-flow} is satisfied when $\tau_k \leq C \rho_{\mathrm{inv}}^{-2}$. In two spatial dimensions, this leads to a favorable stability condition $\tau_k \leq C |\log h|^{-1}$, and in three dimensions, the stability condition becomes $\tau_k \leq C h$.

\section{Preliminaries}\label{sec:prelim}

Let $\Omega \subset \mathbb{R}^\ell$ be a bounded domain fitted by a shape-regular sequence of meshes $\{\mathcal{T}_h\}_{h>0}$. Each element $T\in \mathcal{T}_h$ will have a local mesh size $h_T$, and we write $h = \max\{ h_T : T\in \mathcal{T}_h\}$ and $h_{\mathrm{min}} = \min\{ h_T : T\in \mathcal{T}_h\}$ for the maximum and minimum element diameters. In this paper, we consider the standard continuous piecewise affine finite element space
\begin{equation}\label{eq:fe-space}
\mathcal{S}^{1}(\mathcal{T}_{h} ; \mathbb{R}^{d})=\{w_{h} \in C(\bar{\Omega} ; \mathbb{R}^{d}):\left.w_{h}\right|_{T}  \text{ is affine for all } T \in \mathcal{T}_{h}\},
\end{equation}
as well as the usual Lagrange nodal interpolant denoted by $\mathcal{I}_h$. Additionally, we will denote the discrete space with homogeneous Dirichlet boundary conditions on $\Gamma_D$ by $\mathcal{S}_\mathrm{D}^{1}(\mathcal{T}_{h} ; \mathbb{R}^{d})$.

In order to discretize the unit length constraint $|u|^2 = 1$ a.e., we enforce a relaxed unit length constraint at nodes $z\in \mathcal{N}_h$ with tolerance $\delta>0$. The Dirichlet boundary condition $u=g$ is enforced on $\Gamma_\mathrm{D} \neq\emptyset$. In order to enforce boundary conditions for the discrete problem at nodes, we assume $g$ is the trace of a function in $W^{1,p}(\Omega)$ for $p>\ell$. The admissible set for the discrete problem is
\begin{equation}\label{eq:discrete-admissible-set}
\begin{aligned} \mathcal{A}_{h,\delta}=\{w_{h} \in \mathcal{S}^{1}(\mathcal{T}_{h} ; \mathbb{R}^{d}):  w_{h}=g \text { in }\mathcal{N}_{h} \cap \Gamma_{\mathrm{D}},\, \Vert \mathcal{I}_h [|w_h|^2 - 1]\Vert_{L^1(\Omega)} \leq \delta \}\end{aligned}.
\end{equation}
A standard result in \cite[Example 4.6]{bartels2015numerical} is that the Dirichlet energy over the discrete admissible set $\mathcal{A}_{h,\delta}$ $\Gamma$-converges in the weak $H^1(\Omega)$ topology to the Dirichlet energy over the continuous admissible set $\mathcal{A}$ as $h,\delta\to0$.

An important space for both the unconstrained and the constrained flow is the tangent space to $\mathcal{A}_{h,\delta}$ at $u_h$. This space is defined by
\begin{equation}\label{eq:discrete-tangent-space}
T\mathcal{A}_{h,\mathrm{D}}(u_h) := \{ w_{h} \in \mathcal{S}^{1}_D(\mathcal{T}_{h} ; \mathbb{R}^{d}) : w_{h}(z)\cdot u_h(z) = 0 \text { for all } z \in \mathcal{N}_{h}\}.
\end{equation}
\subsection{Stability of pointwise tangent space projection}
Recall from \eqref{eq:operator-stability} that a key ingredient of the stability of the unconstrained scheme is the stability of the tangent space projection operator. Defining the discrete tangent space projection requires a discrete version of $u\mapsto u/|u|$. Our definition  follows \cite{bartels2005stability, bartels2016projection}.
\begin{definition}[discrete nodal projection]
Let $u_h\in \mathcal{S}^{1}(\mathcal{T}_{h} ; \mathbb{R}^{d})$ be such that $|u_h(z)| \geq 1$ for all $z\in \mathcal{N}_h$. We define its nodal projection $\tilde{u}_h$ as $\tilde{u}_h = \mathcal{I}_h[u_h / |u_h|]$.
\end{definition}
The tangent space projection operator is defined similarly.
\begin{definition}[discrete tangent space projection]\label{def:discrete-tangent-proj}
Let $u_h \in \mathcal{A}_{h,\delta}$ with $|u_h(z)| \geq 1$ for all $z\in \mathcal{N}_h$. The discrete tangent projection $P_{h,u_h}:\mathcal{S}^{1}_{\mathrm{D}}(\mathcal{T}_{h} ; \mathbb{R}^{d}) \to T\mathcal{A}_{h,\mathrm{D}}(u_h)$ is defined by
\begin{equation}\label{eq:def-proj}
P_{h,u_h} w_h = w_h - \mathcal{I}_h \left( \tilde{u}_h(\tilde{u}_h\cdot w_h)\right).
\end{equation}
\end{definition}
The stability of the time-stepping scheme relies on estimates on the operator norms of $P_{h,u_h}$. The lemma below estimates the operator norm using $\Vert \nabla \tilde{u}_h\Vert_{L^\infty} $, $\Vert  \nabla u_h \Vert $, and $h_{\mathrm{min}}$. As in \cite{bartels2022stable, bouck2024projection}, the proof uses a discrete Sobolev inequality $\Vert w_h\Vert_{L^\infty} \leq C\rho_{\mathrm{inv}} \Vert \nabla w_h\Vert$ (see \cite{brenner2008mathematical} and \cite[Remark 3.8]{bartels2015numerical}).
\begin{lemma}[stability of $P_{h,u_h}$]\label{lem:proj-stab}
Suppose $|u_h(z)| \geq 1$ for all $z\in \mathcal{N}_h$. Let $P_{h,u_h}$ be the operator defined in Definition \ref{def:discrete-tangent-proj}. There is a constant $C>0$ independent of $h, h_{\mathrm{min}}$ such that the following stability bounds on $P_{h,u_h}$ hold for all $w_h\in \mathcal{S}_\mathrm{D}^1(\mathcal{T}_h;\mathbb{R}^{d})$:
\begin{align}
\Vert P_{h,u_h} w_h\Vert &\leq C\Vert w_h\Vert, \label{eq:proj-l2-l2-stab}\\
\Vert \nabla P_{h,u_h} w_h\Vert &\leq  \Vert \nabla w_h\Vert +Ch_{\mathrm{min}}^{-1}\Vert \mathcal{I}_h (w_h\cdot \tilde{u}_h)\Vert ,\label{eq:proj-l2-h1-stab}\\
\Vert \nabla P_{h,u_h} w_h\Vert &\leq  C\big(1 +  h \, (1+ \rho_{\mathrm{inv}}\|\nabla {u}_{h}\| )   \|\nabla \tilde{u}_{h}\|_{L^{\infty}}+\rho_{\mathrm{inv}} \Vert \nabla {u}_h\Vert\big)   \Vert \nabla w_h\Vert, \label{eq:proj-h1-h1-stab}
\end{align}
where $\rho_{\mathrm{inv}} = h_{\mathrm{min}}^{1-\frac{\ell}{2}}$ for  $\ell \geq 3$ and $|\log h_{\mathrm{min}}|^{1/2}$ for $\ell=2$.
\end{lemma}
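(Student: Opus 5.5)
All three bounds follow from the splitting
\[
P_{h,u_h} w_h = w_h - \mathcal{I}_h(\tilde u_h q_h), \qquad q_h := \mathcal{I}_h(\tilde u_h\cdot w_h),
\]
which is valid because $\mathcal{I}_h(\tilde u_h(\tilde u_h\cdot w_h))$ depends only on nodal values. We will also use two standard facts for piecewise affine functions on shape-regular meshes. First, the $L^2$ norm is equivalent to the weighted nodal $\ell^2$ norm $\big(\sum_z h_z^\ell |w_h(z)|^2\big)^{1/2}$. Second, the local inverse inequality holds, $\|\nabla \phi_h\|_{L^2(T)}\lesssim h_T^{-1}\|\phi_h\|_{L^2(T)}$.

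\emph{Bound \eqref{eq:proj-l2-l2-stab}.} At each node, $P_{h,u_h}w_h(z) = (I-\tilde u_h(z)\otimes\tilde u_h(z))w_h(z)$. Since $|\tilde u_h(z)|=1$, this is an orthogonal projection, so $|P_{h,u_h}w_h(z)|\le |w_h(z)|$. Nodal norm equivalence then gives $\|P_{h,u_h}w_h\|\le C\|w_h\|$.

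\emph{Bound \eqref{eq:proj-l2-h1-stab}.} The triangle inequality gives
\[
\|\nabla P_{h,u_h}w_h\|\le \|\nabla w_h\| + \|\nabla \mathcal{I}_h(\tilde u_h q_h)\|.
\]
For the second term, apply the local inverse inequality on each element and bound $h_T^{-1}\le h_{\min}^{-1}$. Then use nodal norm equivalence together with $|\tilde u_h(z)q_h(z)| = |q_h(z)|$ to get $\|\mathcal{I}_h(\tilde u_h q_h)\|\le C\|q_h\|$. This is exactly the stated form, with constant $1$ in front of $\|\nabla w_h\|$.

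\emph{Bound \eqref{eq:proj-h1-h1-stab}.} This is the main work, since the inverse inequality can no longer be used. On each element $T$ we compare the interpolant with the pointwise product:
\[
\mathcal{I}_h(\tilde u_h q_h) = \tilde u_h q_h + (\mathcal{I}_h - I)(\tilde u_h q_h), \qquad q_h = \tilde u_h\cdot w_h + (\mathcal{I}_h - I)(\tilde u_h\cdot w_h).
\]
The pointwise products are handled with the product rule and $\|\tilde u_h\|_{L^\infty}\le 1$:
\[
\nabla(\tilde u_h q_h) = \nabla\tilde u_h\, q_h + \tilde u_h\otimes\nabla q_h .
\]
The term $\tilde u_h\otimes\nabla q_h$ is bounded by $\|\nabla w_h\|$ plus $\|\nabla\tilde u_h\|\,\|w_h\|_{L^\infty}$. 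The key observation is the estimate $|\nabla \tilde u_h|\le C|\nabla u_h|$ elementwise. It should follow because $u\mapsto u/|u|$ is $1$-Lipschitz on $\{|u|\ge1\}$: on each element, the nodal differences of $\tilde u_h$ are bounded by those of $u_h$, and this is exactly where the hypothesis $|u_h(z)|\ge1$ enters. Combined with $\|w_h\|_{L^\infty}\le C\rho_{\mathrm{inv}}\|\nabla w_h\|$, this gives the $\rho_{\mathrm{inv}}\|\nabla u_h\|\,\|\nabla w_h\|$ term. The term $\nabla\tilde u_h\, q_h$ is handled the same way, using $\|q_h\|_{L^\infty}\le\|w_h\|_{L^\infty}$.

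The interpolation-error terms are where the $h\|\nabla\tilde u_h\|_{L^\infty}$ contributions arise. On each $T$, $\tilde u_h$ and $w_h$ are affine, so $D^2(\tilde u_h\cdot w_h) = 2\nabla\tilde u_h^{\!\top}\nabla w_h$ is constant. The elementwise estimate $\|\nabla(\mathcal{I}_h-I)f\|_{L^2(T)}\lesssim h_T\|D^2f\|_{L^2(T)}$ then yields a bound $C h\|\nabla\tilde u_h\|_{L^\infty}\|\nabla w_h\|$. The product $\tilde u_h q_h$ is treated the same way: $q_h$ is affine on $T$, and $D^2(\tilde u_h q_h)$ involves $\nabla\tilde u_h\otimes\nabla q_h$. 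Here $\|\nabla q_h\|$ has already been bounded by $(1+\rho_{\mathrm{inv}}\|\nabla u_h\|)\|\nabla w_h\|$, which gives the term $h(1+\rho_{\mathrm{inv}}\|\nabla u_h\|)\|\nabla\tilde u_h\|_{L^\infty}$.

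The main obstacle is ordering the interpolation-error terms so that the factor $\|\nabla\tilde u_h\|_{L^\infty}$ always multiplies an $L^2$ norm of a gradient that has already been controlled, and never an $L^\infty$ norm. The elementwise Lipschitz estimate $|\nabla\tilde u_h|\lesssim|\nabla u_h|$ is also needed, and the route through nodal differences should suffice for it.
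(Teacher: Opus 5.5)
Your proposal is correct in substance and follows the paper's proof. The first two bounds are argued exactly as in the paper: nodal contraction plus $L^2$ norm equivalence, then triangle inequality plus the inverse estimate with $h_{\mathrm{min}}^{-1}$. For the third bound you use the same ingredients: a split into an interpolation error and a product-rule term, the discrete Sobolev inequality, and $\|\nabla\tilde u_h\|\le C\|\nabla u_h\|$. The paper cites that last bound from \cite[Lemma 2.2]{bartels2016projection}; your derivation of it from the $1$-Lipschitz property of $u\mapsto u/|u|$ on $\{|u|\ge 1\}$ is correct.

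The only real difference is that you interpolate in two stages through $q_h=\mathcal{I}_h(\tilde u_h\cdot w_h)$. The paper instead applies one interpolation estimate to the cubic $\tilde u_h(\tilde u_h\cdot w_h)$. Both are legitimate, since $\mathcal{I}_h(\tilde u_h q_h)=\mathcal{I}_h(\tilde u_h(\tilde u_h\cdot w_h))$, but your version contains a bookkeeping slip. What your argument actually gives is $\|\nabla q_h\|\le \big(1+C\rho_{\mathrm{inv}}\|\nabla u_h\|+Ch\|\nabla\tilde u_h\|_{L^\infty}\big)\|\nabla w_h\|$, not the bound you quote later.

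Inserting this into the second interpolation error produces a factor $h^2\|\nabla\tilde u_h\|_{L^\infty}^2$. That factor is not in the stated estimate, and it is not controlled on non-quasiuniform meshes, because $h\|\nabla\tilde u_h\|_{L^\infty}$ can be of size $h/h_{\mathrm{min}}$. There are two ways to close this:
\begin{itemize}
\item Work elementwise and use $h_T\|\nabla\tilde u_h\|_{L^\infty(T)}\le C$, which holds because the nodal values of $\tilde u_h$ are unit vectors. This turns the first interpolation error into $C\|\nabla w_h\|_{L^2(T)}$ and justifies your claim $\|\nabla q_h\|\le C(1+\rho_{\mathrm{inv}}\|\nabla u_h\|)\|\nabla w_h\|$.
\item Interpolate the cubic product once, as the paper does, so the nested term never appears.
\end{itemize}

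Incidentally, the ordering you flag as the main obstacle is not one. In the paper's interpolation term the factor $\|\nabla\tilde u_h\|_{L^\infty}$ does multiply $\|\nabla\tilde u_h\|\,\|w_h\|_{L^\infty}$. That product is then handled by the discrete Sobolev inequality.
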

\begin{proof}
We prove the three bounds one-by-one.

\noindent\textit{Step 1. Proof of \eqref{eq:proj-l2-l2-stab}:} At every node $z\in \mathcal{N}_h$, $|(I - \tilde{u}_h(z) \tilde{u}_h(z)^\top)w_h(z)| \leq |w_h(z)|$. Combining the nodal bound with the norm equivalence $\Vert v_h\Vert \approx (\sum_{z\in \mathcal{N}_h} h_z^\ell |v_h(z)|^2)^{1/2}$, where $h_z$ is the diameter of the local patch at node $z$, proves \eqref{eq:proj-l2-l2-stab}.

\noindent\textit{Step 2. Proof of \eqref{eq:proj-l2-h1-stab}:} The definition $P_{h,u_h} w_h = w_h - \mathcal{I}_h \left( \tilde{u}_h(\tilde{u}_h\cdot w_h)\right)$, the triangle inequality, and the inverse estimate $\Vert \nabla v_h\Vert \leq C h_{\mathrm{min}}^{-1} \Vert v_h\Vert$ prove
$$
\Vert \nabla P_{h,u_h} w_h\Vert \leq \Vert\nabla w_h\Vert + \Vert\nabla \mathcal{I}_h \big( \tilde{u}_h(\tilde{u}_h\cdot w_h)\big) \Vert
\leq\Vert\nabla w_h\Vert  + C h_{\mathrm{min}}^{-1}\Vert \mathcal{I}_h ( \tilde{u}_h(\tilde{u}_h\cdot w_h)) \Vert.
$$
The nodal bound $|\tilde{u}_h(z)(\tilde{u}_h(z)\cdot w_h(z))|\leq |\tilde{u}_h(z)\cdot w_h(z)|$ for all $z\in \mathcal{N}_h$ with the same norm equivalence above proves that the second term on the RHS above satisfies $\Vert \mathcal{I}_h \left( \tilde{u}_h(\tilde{u}_h\cdot w_h)\right) \Vert \leq C\Vert \mathcal{I}_h ( \tilde{u}_h\cdot w_h) \Vert$, which completes the proof of \eqref{eq:proj-l2-h1-stab}.

\noindent\textit{Step 3. Proof of \eqref{eq:proj-h1-h1-stab}:} The triangle inequality and the definition of $P_{h,u_h}$ in \eqref{eq:def-proj} yield:
\begin{align*}
\Vert \nabla P_{h,u_h} w_h\Vert &\leq \Vert \nabla  w_h\Vert +\Vert \nabla\big[  \tilde{u}_h(\tilde{u}_h\cdot w_h) - \mathcal{I}_h \left( \tilde{u}_h(\tilde{u}_h\cdot w_h)\right)\big]\Vert+  \Vert \nabla \left( \tilde{u}_h(\tilde{u}_h\cdot w_h)\right)\Vert\\
& =\Vert \nabla  w_h\Vert + I +  II
\end{align*}
The bound of $I$ follows the arguments from \cite{bartels2016projection}. For each element $T\in\mathcal{T}_h$, we first use a local interpolation error estimate
$$
\Vert \nabla\left[  \tilde{u}_h(\tilde{u}_h\cdot w_h) - \mathcal{I}_h ( \tilde{u}_h(\tilde{u}_h\cdot w_h))\right]\Vert_{L^{2}(T)}
\leq ch_T\|D^{2}(\tilde{u}_{h} \tilde{u}_{h}^{\top} w_{h})\|_{L^{2}(T)}.
$$
The product rule, the affineness of $\tilde{u}_h$ and $w_h$, and H\"older's inequality control the Hessian:
\begin{align*}
&\|D^{2}(\tilde{u}_{h} \tilde{u}_{h}^{\top} w_{h})\|_{L^{2}(T)}  \leq C\|\nabla \tilde{u}_{h}\|_{L^{2}(T)}\|\nabla \tilde{u}_{h}\|_{L^{\infty}(T)}\|w_{h}\|_{L^{\infty}(T)}+C\|\tilde{u}_{h}\|_{L^{\infty}(T)}\|\nabla \tilde{u}_{h}\|_{L^{\infty}(T)}\|\nabla w_{h}\|_{L^{2}(T)}.
\end{align*}
Using the inequality $(a+b)^2 \leq 2 a^2 +2 b^2$ and summing over elements, we have
$$
I \leq C h \|\nabla \tilde{u}_{h}\|_{L^{\infty}} \left(\|\nabla \tilde{u}_{h}\|\|w_{h}\|_{L^{\infty}}+\|\tilde{u}_{h}\|_{L^{\infty}}\|\nabla w_{h}\|\right) \leq C h \|\nabla \tilde{u}_{h}\|_{L^{\infty}} \left(\|\nabla \tilde{u}_{h}\|\|w_{h}\|_{L^{\infty}}+\|\nabla w_{h}\|\right).
$$
The discrete Sobolev inequality 
$
\|w_{h}\|_{L^{\infty}} \leq C\rho_{\mathrm{inv}}\| \nabla w_{h}\|
$
leads to
$$
I \leq C h \, (1+ \rho_{\mathrm{inv}}\|\nabla \tilde{u}_{h}\| )   \|\nabla \tilde{u}_{h}\|_{L^{\infty}}\|\nabla w_{h}\|.
$$
A second use of the product rule, H\"older's inequality, and the $L^\infty$ to $H^1$ discrete Sobolev inequality handles $II$:
$$
II \leq C  \Vert  \tilde{u}_h\Vert_{L^\infty}\Vert \nabla \tilde{u}_h \Vert \Vert w_h\Vert_{L^\infty} + C  \Vert  \tilde{u}_h\Vert_{L^\infty}^2 \Vert \nabla w_h\Vert  \leq C(1+ \rho_{\mathrm{inv}} \Vert \nabla \tilde{u}_h\Vert) \Vert \nabla w_h\Vert.
$$
Inserting the estimates of $I,II$ into the initial bound of $P_{h,u_h}$ yields
$$
\Vert \nabla P_{h,u_h} w_h\Vert \leq C\big(1 +  h \, (1+ \rho_{\mathrm{inv}}\|\nabla \tilde{u}_{h}\| )   \|\nabla \tilde{u}_{h}\|_{L^{\infty}}+  \rho_{\mathrm{inv}} \Vert \nabla \tilde{u}_h\Vert\big)   \Vert \nabla w_h\Vert.
$$
The nodal projection $u_h\mapsto \tilde{u}_h$ is stable in $H^1$ in the sense that there is a potentially large constant $C>0$ depending only on mesh geometry such that $\|\nabla \tilde{u}_{h}\| \leq C\|\nabla {u}_{h}\|$ for all $u_h\in \mathcal{S}^{1}(\mathcal{T}_{h} ; \mathbb{R}^{d})$ with $|u_h(z)| \geq 1$ for all $z\in \mathcal{N}_h$ \cite[Lemma 2.2]{bartels2016projection}. Inserting $\|\nabla \tilde{u}_{h}\| \leq C\|\nabla {u}_{h}\|$ proves the result.
\end{proof}

\section{Harmonic Maps} \label{sec:harmonic-maps}

This section addresses energy minimization of harmonic maps. Given a $\tau>0$ and $u^k_h\in \mathcal{A}_{h,\delta}$, the unconstrained step $v_h^{k+1} \in \mathcal{S}_{\mathrm{D}}^1(\mathcal{T}_h; \mathbb{R}^d)$ will solve
\begin{equation}\label{eq:discrete-flow}
    (v_h^{k+1}, w_h)_* + \gamma (\mathcal{I}_h(\tilde{u}_h^k \cdot v_h^{k+1}), \mathcal{I}_h(\tilde{u}_h^k \cdot w_h)) + \tau(\nabla v_h^{k+1}, \nabla w_h) = -(\nabla u_h^k, \nabla P_{h,u_h^k} w_h),
\end{equation}
for all $w_h \in \mathcal{S}_{\mathrm{D}}^1(\mathcal{T}_h; \mathbb{R}^d)$.

In order to define the variable time-stepping scheme and quantify the stability of the tangent space projection operator $P_{h,u_h}$, we introduce the following ratio of norms to condense the notation:
\begin{equation}\label{eq:norm-ratio}
R_{h,\gamma,*}[v_h^{k+1}] := 2\frac{\|v_h^{k+1}\|_*^2+ \gamma \Vert \mathcal{I}_h(\tilde{u}_h^k \cdot v_h^{k+1})\Vert^2}{ \|\nabla P_{h,u_h^k}v_h^{k+1}\|^2}.
\end{equation}
As we will see later,  $\tau \leq (1-\alpha)R_{h,\gamma,*}[v_h^{k+1}] $ is equivalent to the stability condition \eqref{eq:stab-criteria}, which will imply the monotone decay of the energy.

Before stating the gradient flow algorithm in Alg.~\ref{alg:unconstrained-flow}, we remark on a practical aspect of $R_{h,\gamma,*}[v_h^{k+1}]$. At first glance, the above ratio is not well-defined when $\nabla P_{h,u_h^k}v_h^{k+1}=0$. However, if $v_h^{k+1}$ solves \eqref{eq:discrete-flow} and $\nabla P_{h,u_h^k}v_h^{k+1} = 0$, then we use the test function $w_h = v_h^{k+1}$ in \eqref{eq:discrete-flow} to show that $v_h^{k+1}=0$. In this case, $u^k_h$ would already be a critical point of the Dirichlet energy in the admissible set $\mathcal{A}_{h,\delta}$.

\begin{algorithm}[h]
\SetAlgoLined
\KwIn{initial value $u_h^0 \in \mathcal{A}_{h,0}$, stopping criterion $\varepsilon> 0$, penalty parameter $\gamma\geq 0$, adaptive time step parameter $0< \alpha<1$, initial step size $\tau_0$, and maximum step size $\tau_{\mathrm{max}}$. }

\For{$k=0,\ldots$}{
\textbf{(1)} Compute $v^{k+1}_h$ as in \eqref{eq:discrete-flow}\;
    \eIf{
    $\tau \leq (1-\alpha)R_{h,\gamma,*}[v_h^{k+1}]$}{
	Set $\tau_k = \tau$, $d_tu^{k+1}_h = P_{h,u_h^k}v_h^{k+1}$ and $u_h^{k+1} = u_h^k + \tau_k d_t u_h^{k+1}$\;
        Increase $\tau$ via $\tau \mapsto \min\{\tau_{\mathrm{max}},  (1-\alpha)R_{h,\gamma,*}[v_h^{k+1}]\}$, and continue with (2)\;
    }{Decrease $\tau$ via $\tau \mapsto (1-\alpha)R_{h,\gamma,*}[v_h^{k+1}]$, and go to (1) to repeat computation\;}

\textbf{(2)}\;
  \If{$\|v_h^{k+1}\|_* < \varepsilon$}{
        Stop the iteration and set $u_h^{\infty} = u_h^{k+1}$\;
    }
}

\caption{Unconstrained scheme for energy minimization}
\label{alg:unconstrained-flow}
\end{algorithm}
The time-step update is just one of many possible strategies for adjusting $\tau$. As will be seen in Section \ref{sec:computations}, the increase in $\tau$ is often too large, and the stability bound \eqref{eq:stab-criteria} below is violated in the next step. Alg.~\ref{alg:unconstrained-flow} then decreases $\tau$ and recomputes \eqref{eq:discrete-flow}, which sometimes doubles the cost per iteration compared with the same scheme run with constant step sizes. In this paper, we are primarily interested in the stability and convergence properties of the method, and leave finding a more efficient adaptive time-stepping algorithm to future work. The unconstrained flow algorithm enjoys some energy stability properties as long as a computable a posteriori criterion holds, which we state in the proposition below.

\begin{prop}[a posteriori energy stability for $(\cdot, \cdot)_*$ flows]\label{prop:a-posteriori-energy-stability}
Let $0 < \alpha< 1$, $0\leq\gamma<\infty$, and let $\tau_k$ be a sequence of step sizes chosen such that $\tau_k\leq \tau_{\mathrm{max}}$.
If the scheme satisfies
\begin{equation}\label{eq:stab-criteria}
 \frac{\tau_k}{2} \Vert \nabla P_{h, u^k_h} v_h^{k+1}\Vert^2 \leq (1-\alpha) \left( \Vert v_h^{k+1}\Vert_*^2 + \gamma \Vert \mathcal{I}_h(\tilde{u}_h^k \cdot v_h^{k+1})\Vert^2\right),
\end{equation}
then the scheme satisfies the energy estimate
\begin{equation}\label{eq:star-flow-stability}
\frac{1}{2} \Vert \nabla u^{k+1}_h\Vert^2 + \tau_k \alpha\big(\Vert v^{k+1}_h \Vert_*^2 + \gamma \Vert \mathcal{I}_h(\tilde{u}_h^k \cdot v_h^{k+1})\Vert^2\big) \leq \frac{1}{2}  \Vert \nabla u^{k}_h\Vert^2.
\end{equation}
Moreover, if $(\cdot,\cdot)_*$ controls the $L^2$ norm in the sense that $\Vert w_h\Vert \lesssim \Vert w_h\Vert_*$, and $u_h^0 \in \mathcal{A}_{h,0}$, then there is a constant $C>0$ such that
\begin{equation}\label{eq:unit-length-error-control}
\Vert \mathcal{I}_h [|u^{k+1}_h|^2 - 1]\Vert_{L^1(\Omega)} \leq C\alpha^{-1} \tau_{\mathrm{max}} \Vert \nabla u^0_h\Vert^2.
\end{equation}
\end{prop}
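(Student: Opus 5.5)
Test \eqref{eq:discrete-flow} with $w_h = v_h^{k+1}$. By Definition \ref{def:discrete-tangent-proj} and the update $u_h^{k+1} = u_h^k + \tau_k P_{h,u_h^k} v_h^{k+1}$, the right-hand side becomes $-(\nabla u_h^k, \nabla d_t u_h^{k+1})$. This gives
\begin{equation*}
\Vert v_h^{k+1}\Vert_*^2 + \gamma \Vert \mathcal{I}_h(\tilde{u}_h^k\cdot v_h^{k+1})\Vert^2 + \tau_k \Vert \nabla v_h^{k+1}\Vert^2 = -(\nabla u_h^k, \nabla d_t u_h^{k+1}).
\end{equation*}
Apply the identity $(a,a-b) = \frac12\Vert a\Vert^2 - \frac12 \Vert b\Vert^2 + \frac12 \Vert a-b\Vert^2$ with $a = \nabla u_h^k$, $b = \nabla u_h^{k+1}$, and multiply by $\tau_k$. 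The right-hand side then equals $\frac12\Vert\nabla u_h^k\Vert^2 - \frac12 \Vert \nabla u_h^{k+1}\Vert^2 + \frac{\tau_k^2}{2}\Vert \nabla P_{h,u_h^k} v_h^{k+1}\Vert^2$. Dropping the nonnegative term $\tau_k^2\Vert\nabla v_h^{k+1}\Vert^2$ and using \eqref{eq:stab-criteria} (multiplied by $\tau_k$) to absorb the last term, a fraction $1-\alpha$ of the dissipation is used up. The remaining fraction $\alpha$ yields \eqref{eq:star-flow-stability} directly. This part is a routine calculation.

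For the constraint violation, work nodewise. Since $d_t u_h^{k+1}(z) = P_{h,u_h^k}v_h^{k+1}(z)$ is orthogonal to $\tilde u_h^k(z)$, and hence to $u_h^k(z)$, we get
\begin{equation*}
|u_h^{k+1}(z)|^2 = |u_h^k(z)|^2 + \tau_k^2 |d_t u_h^{k+1}(z)|^2 .
\end{equation*}
In particular $|u_h^k(z)|\ge 1$ propagates by induction from $u_h^0\in\mathcal{A}_{h,0}$. This also justifies the well-definedness of $\tilde u_h^k$ and $P_{h,u_h^k}$ at every step, which I will point out explicitly. Since $\mathcal{I}_h[|u_h^{k+1}|^2-1]\ge 0$, its $L^1$ norm is a weighted nodal sum, and $\Vert \mathcal{I}_h |w_h|^2\Vert_{L^1}\lesssim \Vert w_h\Vert^2$ by the nodal norm equivalence from Lemma \ref{lem:proj-stab}. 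Therefore
\begin{equation*}
\Vert \mathcal{I}_h[|u_h^{k+1}|^2-1]\Vert_{L^1} \le \Vert \mathcal{I}_h[|u_h^{k}|^2-1]\Vert_{L^1} + C\tau_k^2 \Vert P_{h,u_h^k}v_h^{k+1}\Vert^2 .
\end{equation*}

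Next, \eqref{eq:proj-l2-l2-stab} together with $\Vert\cdot\Vert\lesssim\Vert\cdot\Vert_*$ bounds $\tau_k^2\Vert P_{h,u_h^k} v_h^{k+1}\Vert^2 \le C\tau_{\max}\,\tau_k \Vert v_h^{k+1}\Vert_*^2$. Then \eqref{eq:star-flow-stability} bounds $\tau_k\Vert v_h^{k+1}\Vert_*^2 \le (2\alpha)^{-1}\big(\Vert\nabla u_h^k\Vert^2 - \Vert \nabla u_h^{k+1}\Vert^2\big)$. Summing over $k$, the energy differences telescope, and the initial violation vanishes, which gives \eqref{eq:unit-length-error-control}.

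I expect no real obstacle. The only delicate points are keeping the constants independent of $h$, which holds because the nodal equivalence and \eqref{eq:proj-l2-l2-stab} have mesh-independent constants, and making sure the induction hypothesis $|u_h^k(z)|\ge1$ is in place at each step.
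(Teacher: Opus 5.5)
Your proposal is correct and follows the same route as the paper. You test \eqref{eq:discrete-flow} with $v_h^{k+1}$, apply the quadratic identity to obtain the energy identity, absorb $\frac{\tau_k^2}{2}\Vert\nabla P_{h,u_h^k}v_h^{k+1}\Vert^2$ via \eqref{eq:stab-criteria}, and telescope the nodal orthogonality relation as in \cite{bartels2016projection}; the explicit induction on $|u_h^k(z)|\geq 1$ and the use of \eqref{eq:proj-l2-l2-stab} with $\tau_k\le\tau_{\max}$ just fill in details the paper leaves implicit.
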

\begin{proof}
First, test \eqref{eq:discrete-flow} with $v^{k+1}_h$, recognize that $P_{h,u_h^k}v_h^{k+1}  = d_tu^{k+1}_h$, and repeat the arguments from \eqref{eq:v-eq} to \eqref{eq:energy-identity-heat} in Section \ref{sec:intro-heat-flow} by replacing $(\cdot,\cdot)$ with $(\cdot,\cdot)_*$. The resulting energy identity is
\begin{equation}\label{eq:energy-identity}
d_t \frac{1}{2} \Vert \nabla u_h^{k+1}\Vert^2 +   \Vert v_h^{k+1}\Vert_*^2 + \gamma \Vert \mathcal{I}_h(\tilde{u}_h^k \cdot v_h^{k+1})\Vert^2 + \tau_k\Vert \nabla v_h^{k+1}\Vert^2  = \frac{\tau_k}{2} \Vert \nabla P_{h, u^k_h} v_h^{k+1}\Vert^2.
\end{equation}
Applying \eqref{eq:stab-criteria} to bound the RHS above and absorbing it into the LHS immediately proves \eqref{eq:star-flow-stability}. The error control \eqref{eq:unit-length-error-control} follows immediately from \eqref{eq:star-flow-stability}, arguments in \cite{bartels2016projection} (also outlined in Section \ref{sec:intro-heat-flow}), and tracking the computations with $\alpha$. 
\end{proof}
In addition to the iteration being stable under an a posteriori condition, Alg.~\ref{alg:unconstrained-flow} satisfies \eqref{eq:stab-criteria} given a small enough $\tau$ even if $\gamma=0$.
\begin{corollary}[conditional stability of $H^1$ flow with constant step sizes]\label{cor:conditional-stability-h1-flow}
Assume $\{\mathcal{T}_h\}_{h>0}$ is a quasiuniform sequence of meshes, $0<\alpha<1$, and $\gamma\geq0$. Let $(\cdot,\cdot)_* = (\cdot,\cdot)_{H^1}$. Finally, assume constant step sizes $\tau_k = \tau$ for all $k$. The sequence $u^k_h$ generated by Alg.~\ref{alg:unconstrained-flow} satisfies the stability estimate \eqref{eq:star-flow-stability} and control over the unit length constraint violation
\eqref{eq:unit-length-error-control} provided
\begin{equation}\label{eq:h1-cfl}
\tau \leq C(1-\alpha)(1+\|\nabla {u}^0_{h}\|^2)^{-1}\rho_{\mathrm{inv}}^{-2}, \quad \rho_\mathrm{inv}^{-2} =  \begin{cases}
h^{\ell - 2}, &\quad \text{ for } \ell \geq 3\\
|\log h|^{-1},& \quad \text{ for } \ell =2
\end{cases},
\end{equation}
where $C$ is a constant independent of $h$, and $\ell$ is the spatial dimension.
\end{corollary}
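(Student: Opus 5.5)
Our plan is an induction on $k$: we show the a posteriori criterion \eqref{eq:stab-criteria} holds at every step under \eqref{eq:h1-cfl}, and then invoke Proposition \ref{prop:a-posteriori-energy-stability}. The induction hypothesis is that \eqref{eq:stab-criteria} holds for all previous steps. By \eqref{eq:star-flow-stability} this gives $\Vert \nabla u_h^k\Vert \leq \Vert \nabla u_h^0\Vert$. It also keeps the iterates admissible, with $|u_h^k(z)|\geq 1$ at every node, since $d_tu_h^{k+1}(z)\cdot u_h^k(z)=0$ gives $|u_h^{k+1}(z)|^2 = |u_h^k(z)|^2 + \tau^2|d_tu_h^{k+1}(z)|^2$. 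Lemma \ref{lem:proj-stab} therefore applies at step $k$. Since the step size is constant and equal to $\tau$, Alg.~\ref{alg:unconstrained-flow} never rejects a step once \eqref{eq:stab-criteria} holds at every step, so the constant-step sequence coincides with the algorithm's output.

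For the step itself, we take $(\cdot,\cdot)_* = (\cdot,\cdot)_{H^1}$. Then $\Vert v_h^{k+1}\Vert_*^2 \geq \Vert \nabla v_h^{k+1}\Vert^2$, and it suffices to show
\begin{equation*}
\tfrac{\tau}{2}\Vert \nabla P_{h,u_h^k} v_h^{k+1}\Vert^2 \leq (1-\alpha)\Vert \nabla v_h^{k+1}\Vert^2 .
\end{equation*}
Here we simply drop the $\gamma$ term, which is nonnegative; this is why the result holds for any $\gamma\geq 0$. We then apply the $H^1$–$H^1$ bound \eqref{eq:proj-h1-h1-stab}. On a quasiuniform mesh, $h_{\min}\approx h$ and $\rho_{\mathrm{inv}}$ reduces to the stated $h^{1-\ell/2}$ or $|\log h|^{1/2}$. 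The remaining task is to bound the term $h(1+\rho_{\mathrm{inv}}\Vert\nabla u_h^k\Vert)\Vert\nabla\tilde u_h^k\Vert_{L^\infty}$.

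This term is the main obstacle, because we must control $\Vert\nabla\tilde u_h^k\Vert_{L^\infty}$ without assuming any extra regularity. We use an inverse estimate from $L^2$ to $L^\infty$ on a quasiuniform mesh, namely $\Vert\nabla\tilde u_h\Vert_{L^\infty}\leq Ch^{-\ell/2}\Vert\nabla\tilde u_h\Vert$, together with the stability $\Vert\nabla\tilde u_h\Vert\leq C\Vert\nabla u_h\Vert$ used in the proof of Lemma \ref{lem:proj-stab}. Then
\begin{equation*}
h\Vert\nabla\tilde u_h^k\Vert_{L^\infty}\lesssim h^{1-\ell/2}\Vert\nabla u_h^k\Vert \lesssim \rho_{\mathrm{inv}}\Vert\nabla u_h^k\Vert .
\end{equation*}
This uses $h^{1-\ell/2}=\rho_{\mathrm{inv}}$ for $\ell\geq3$, and for $\ell=2$ the bound $h^0=1\lesssim|\log h|^{1/2}$. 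The same estimate also gives $\Vert\nabla\tilde u_h^k\Vert_{L^\infty}\lesssim h^{-1}$ directly, since $|\tilde u_h|=1$ at the nodes and its gradient on each element is a difference of nodal values divided by $h$.

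The difficulty is the product term $h\rho_{\mathrm{inv}}\Vert\nabla u_h\Vert\,\Vert\nabla\tilde u_h\Vert_{L^\infty}$. For it we use the crude bound $\Vert\nabla\tilde u_h\Vert_{L^\infty}\lesssim h^{-1}$, which gives $h\Vert\nabla\tilde u_h\Vert_{L^\infty}\lesssim 1$. The product term is then at most $C\rho_{\mathrm{inv}}\Vert\nabla u_h\Vert$. Altogether,
\begin{equation*}
\Vert\nabla P_{h,u_h^k}w_h\Vert\leq C\bigl(1+\rho_{\mathrm{inv}}\Vert\nabla u_h^0\Vert\bigr)\Vert\nabla w_h\Vert,
\end{equation*}
and squaring gives the factor $C\rho_{\mathrm{inv}}^2(1+\Vert\nabla u_h^0\Vert^2)$, using $\rho_{\mathrm{inv}}\gtrsim 1$.

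To conclude, criterion \eqref{eq:stab-criteria} at step $k$ reduces to $\tfrac{\tau}{2}C\rho_{\mathrm{inv}}^2(1+\Vert\nabla u_h^0\Vert^2)\leq 1-\alpha$, which is exactly \eqref{eq:h1-cfl} after adjusting $C$. This closes the induction. Proposition \ref{prop:a-posteriori-energy-stability} then yields \eqref{eq:star-flow-stability}, and since the $H^1$ norm controls the $L^2$ norm, it also yields \eqref{eq:unit-length-error-control}.
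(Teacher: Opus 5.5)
Your proposal is correct and follows essentially the same route as the paper. Both arguments use induction on $k$ with the energy decay $\Vert\nabla u_h^k\Vert\leq\Vert\nabla u_h^0\Vert$, the bound \eqref{eq:proj-h1-h1-stab}, the crude inverse estimate $h\Vert\nabla\tilde u_h^k\Vert_{L^\infty}\lesssim h/h_{\mathrm{min}}\lesssim 1$ (from $|\tilde u_h^k|\leq 1$ and quasiuniformity), and discard the $L^2$ and $\gamma$ parts of the right-hand side of \eqref{eq:stab-criteria}; your detour through the $L^2$--$L^\infty$ inverse estimate is unnecessary, while your explicit checks that $|u_h^k(z)|\geq 1$ and $\rho_{\mathrm{inv}}\gtrsim 1$ fill in details the paper leaves implicit.
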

\begin{proof}
It is sufficient to prove that if $\tau$ is small enough, then the stability criterion \eqref{eq:stab-criteria} is satisfied for all steps via an inductive argument. Employing \eqref{eq:proj-h1-h1-stab} from Lemma \ref{lem:proj-stab} (stability of $P_{h,u_h}$), the discrete Sobolev inequality $\| {u}^k_{h}\|_{L^\infty} \leq C\rho_{\mathrm{inv}}\|\nabla {u}^k_{h}\|$, and the inverse inequality $\| \nabla \tilde{u}^k_{h}\|_{L^\infty} \leq C h^{-1}_{\mathrm{min}} \| \tilde{u}^k_{h}\|_{L^\infty}$, the RHS is bounded as follows:
\begin{align*}
\frac{\tau}{2} \Vert \nabla P_{h, u^k_h} v_h^{k+1}\Vert^2 &\leq \frac{C\tau}{2} \bigg(1 +  h \, (1+ \rho_{\mathrm{inv}}\|\nabla \tilde{u}^k_{h}\| )   \|\nabla \tilde{u}^k_{h}\|_{L^{\infty}}+  \rho_{\mathrm{inv}} \Vert \nabla {u}^k_h\Vert \bigg)^2   \Vert \nabla v_h^{k+1}\Vert^2 \\
&\leq \frac{C\tau}{2} \bigg(1 +  \frac{h}{h_{\mathrm{min}}} \| \tilde{u}^k_{h}\|_{L^{\infty}}(1+ \rho_{\mathrm{inv}}\|\nabla \tilde{u}^k_{h}\| )+  \rho_{\mathrm{inv}} \Vert \nabla {u}^k_h\Vert \bigg)^2   \Vert \nabla v_h^{k+1}\Vert^2\\
&\leq \frac{C\tau}{2} \left(1+ \frac{h^2}{h_{\mathrm{min}}^2}\rho_{\mathrm{inv}}^2(1+\|\nabla {u}^k_{h}\|^2)\right)\Vert \nabla v_h^{k+1}\Vert^2.
\end{align*}
The stability criterion \eqref{eq:stab-criteria} is clearly satisfied for $k=0$ with $1 - \alpha >0$ provided $\{\mathcal{T}_h\}_{h>0}$ is quasiuniform, and $\tau$ is chosen to satisfy \eqref{eq:h1-cfl}. The energy stability of the first step gives $\|\nabla {u}^1_{h}\|^2\leq \|\nabla {u}^0_{h}\|^2$, and $\tau$ is small enough to fulfill \eqref{eq:stab-criteria} for $k=1$. An inductive argument and a repeat of the arguments in the proof of Proposition \ref{prop:a-posteriori-energy-stability} completes the proof.
\end{proof}

\begin{remark}[termination]\label{rmk:termination}
An important consequence of Corollary \ref{cor:conditional-stability-h1-flow} is that there is always a $\tau$ sufficiently small such that \eqref{eq:stab-criteria} is satisfied, or equivalently, $R_{h,\gamma,*}[v_h^{k+1}]$ and $\tau_k$ are always bounded from below. As a result, for $0<\alpha<1$, Alg.~\ref{alg:unconstrained-flow} will always produce a sequence $\tau_k, u^k_h$ such that \eqref{eq:stab-criteria} is satisfied. The lower bound on $\tau_k$ in addition to the telescoping nature of the energy estimate \eqref{eq:star-flow-stability} also guarantees that for all $\varepsilon>0$, there is a $K_\varepsilon$ such that Alg.~\ref{alg:unconstrained-flow} will terminate at $k = K_\varepsilon$.
\end{remark}
\begin{remark}[convergence to harmonic maps]
Let $u_{h, \tau_{\mathrm{max}}, \varepsilon}$ be the output of Alg.~\ref{alg:unconstrained-flow} for $0<\alpha<1$ and $\gamma\geq0$. With some additional hypotheses, one can show that as $h,\tau_{\mathrm{max}},  \varepsilon \to0$, an accumulation point $u$ of $\{u_{h, \tau_{\mathrm{max}}, \varepsilon}\}_{h,\tau_{\mathrm{max}},\varepsilon>0}$ is a harmonic map, i.e.\ $u\in \mathcal{A}$ and $(\nabla u, \nabla w) = 0$ for all $w\in H^1_{\mathrm{D}}(\Omega)\cap L^\infty(\Omega)$ such that $w\cdot u = 0$ a.e. The precise statement and proof follow \cite[Theorem 7.6]{bartels2015numerical}.
\end{remark}

\section{Harmonic Map Heat Flow}\label{sec:harmonic-map-heat-flow}
This section explores a variant of the unconstrained scheme for harmonic map heat flow with penalty parameter $\gamma>0$. Given a $\tau>0$ and $u^k_h\in \mathcal{A}_{h,\delta}$, the unconstrained velocity $v_h^{k+1} \in \mathcal{S}_{\mathrm{D}}^1(\mathcal{T}_h; \mathbb{R}^d)$ is computed by:
\begin{equation}\label{eq:discrete-flow-heat}
    (v_h^{k+1}, w_h) + \gamma (\mathcal{I}_h(\tilde{u}_h^k \cdot v_h^{k+1}), \mathcal{I}_h(\tilde{u}_h^k \cdot w_h)) + \tau(\nabla v_h^{k+1}, \nabla w_h) = -(\nabla u_h^k, \nabla P_{h,u_h^k} w_h)
\end{equation}
for all $w_h \in \mathcal{S}_{\mathrm{D}}^1(\mathcal{T}_h; \mathbb{R}^d)$.
Similar to Section \ref{sec:harmonic-maps}, the unconstrained flow algorithm requires the following ratio to measure the stability of $P_{h,u_h^k}$:
\begin{equation}\label{eq:norm-ratio-l2}
R_{h,\gamma,L^2}[\tau, v_h^{k+1}] := 2\frac{\tau \|\nabla v_h^{k+1}\|^2+ \gamma \Vert \mathcal{I}_h(\tilde{u}_h^k \cdot v_h^{k+1})\Vert^2}{ \|\nabla P_{h,u_h^k}v_h^{k+1}\|^2}.
\end{equation}
As we will see later, $\tau \leq (1-\alpha)R_{h,\gamma,L^2}[\tau, v_h^{k+1}]$ is equivalent to the stability condition \eqref{eq:stab-criteria-heat} in Proposition \ref{prop:a-posteriori-energy-stability-heat} below.

 \begin{algorithm}[h]
\SetAlgoLined
\KwIn{initial value $u_h^0 \in \mathcal{A}_{h,0}$,
 final time $T$, penalty parameter $\gamma> 0$, adaptive time step parameter $0< \alpha<1$, initial step size $\tau_0$, and maximum step size $\tau_{\mathrm{max}}$. }

Initialize $k = 0$, $t_0=0$\;
\For{$k=0,\ldots$}{

\textbf{(1)} Compute $v_h^{k+1} \in \mathcal{S}_{\mathrm{D}}^1(\mathcal{T}_h; \mathbb{R}^d)$ to solve \eqref{eq:discrete-flow-heat}.\;
    \eIf{
    $\tau \leq (1-\alpha)R_{h,\gamma,L^2}[\tau, v_h^{k+1}]$}{
 Set $\tau_k = \tau$, $d_tu^{k+1}_h = P_{h,u_h^k}v_h^{k+1}$, $u_h^{k+1} = u_h^k + \tau_k d_t u_h^{k+1}$, and $t_{k+1} = t_k +\tau$\;

        Increase $\tau \mapsto \min\{\tau_{\mathrm{max}},  (1-\alpha)R_{h,\gamma,L^2}[\tau, v_h^{k+1}]\}$, and continue with (2)\;
    }{
        Decrease $\tau \mapsto (1-\alpha)R_{h,\gamma,L^2}[\tau, v_h^{k+1}]$, and go to (1) to repeat computation\;
    }
    \textbf{(2)}\;
    \If{$t_{k+1} \geq T$}{Stop\;}
}

\caption{Unconstrained scheme for harmonic map heat flow}
\label{alg:unconstrained-flow-heat}
\end{algorithm}

\begin{prop}[a posteriori energy stability for heat flow]\label{prop:a-posteriori-energy-stability-heat}
Let $0 < \alpha<1$, and let $\tau_k$ be a sequence of step sizes chosen such that $\tau_k\leq \tau_{\mathrm{max}}$. If the discrete scheme satisfies
\begin{equation}\label{eq:stab-criteria-heat}
     \frac{\tau_k}{2} \Vert \nabla \, P_{h, u^k_h} v_h^{k+1}\Vert^2 \leq (1- \alpha)\left[  \tau_k \Vert \nabla v_h^{k+1}\Vert^2+ \gamma \Vert \mathcal{I}_h(\tilde{u}_h^k \cdot v_h^{k+1})\Vert^2\right],
\end{equation}
then the scheme satisfies the energy estimate
\begin{equation}\label{eq:energy-estimate-heat}
    \frac{1}{2} \Vert \nabla u^{k+1}_h\Vert^2 + \tau_k\Vert v^{k+1}_h \Vert^2  +\alpha \tau_k^2\Vert \nabla v^{k+1}_h \Vert^2 + \alpha \tau_k  \gamma \Vert \mathcal{I}_h(\tilde{u}_h^k \cdot v_h^{k+1})\Vert^2 \leq \frac{1}{2}  \Vert \nabla u^{k}_h\Vert^2.
\end{equation}
If $u_h^0\in \mathcal{A}_{h,0}$, there is a constant $C>0$ such that
\begin{equation}\label{eq:constraint-violation-heat}
    \Vert \mathcal{I}_h [|u^{k+1}_h|^2 - 1]\Vert_{L^1(\Omega)} \leq C \tau_{\mathrm{max}} \Vert \nabla u^0_h\Vert^2.
\end{equation}
\end{prop}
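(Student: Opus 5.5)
We follow the proof of Proposition \ref{prop:a-posteriori-energy-stability}, now keeping the $L^2$ term on the left-hand side. Test \eqref{eq:discrete-flow-heat} with $w_h = v_h^{k+1}$. Since $d_t u_h^{k+1} = P_{h,u_h^k}v_h^{k+1}$ and $u_h^{k+1} - u_h^k = \tau_k d_t u_h^{k+1}$, the right-hand side equals $-(\nabla u_h^k, \nabla d_t u_h^{k+1})$. The identity $(a,a-b) = \frac12\Vert a\Vert^2 - \frac12\Vert b\Vert^2 + \frac12\Vert a-b\Vert^2$ from Section \ref{sec:intro-heat-flow} then gives the energy identity
\begin{equation*}
\frac{1}{2\tau_k}\big(\Vert \nabla u_h^{k+1}\Vert^2 - \Vert \nabla u_h^k\Vert^2\big) + \Vert v_h^{k+1}\Vert^2 + \tau_k \Vert \nabla v_h^{k+1}\Vert^2 + \gamma \Vert \mathcal{I}_h(\tilde u_h^k\cdot v_h^{k+1})\Vert^2 = \frac{\tau_k}{2}\Vert \nabla P_{h,u_h^k} v_h^{k+1}\Vert^2 .
\end{equation*}
Bound the right-hand side with \eqref{eq:stab-criteria-heat}. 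Because only the terms $\tau_k\Vert\nabla v_h^{k+1}\Vert^2$ and $\gamma\Vert\mathcal{I}_h(\tilde u_h^k\cdot v_h^{k+1})\Vert^2$ appear there, absorbing it leaves $\alpha$ times these two terms and the full $\Vert v_h^{k+1}\Vert^2$. Multiplying by $\tau_k$ gives \eqref{eq:energy-estimate-heat}.

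For \eqref{eq:constraint-violation-heat}, we argue nodally as in \cite{bartels2016projection}. By Definition \ref{def:discrete-tangent-proj}, $d_t u_h^{k+1}(z)\cdot \tilde u_h^k(z) = 0$, so $d_t u_h^{k+1}(z)\cdot u_h^k(z)=0$ at every node $z$, and hence
\begin{equation*}
|u_h^{k+1}(z)|^2 = |u_h^k(z)|^2 + \tau_k^2 |d_t u_h^{k+1}(z)|^2 .
\end{equation*}
This shows inductively that $|u_h^k(z)|\ge 1$, so $\tilde u_h^k$ is well defined, and that $\mathcal{I}_h[|u_h^{k+1}|^2-1]\ge 0$. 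Integrating and using that the lumped mass is equivalent to the $L^2$ norm yields
\begin{equation*}
\Vert \mathcal{I}_h[|u_h^{k+1}|^2-1]\Vert_{L^1} \le \Vert \mathcal{I}_h[|u_h^{k}|^2-1]\Vert_{L^1} + C\tau_k^2\Vert d_t u_h^{k+1}\Vert^2 .
\end{equation*}
By \eqref{eq:proj-l2-l2-stab}, $\Vert d_tu_h^{k+1}\Vert \le C\Vert v_h^{k+1}\Vert$. Then \eqref{eq:energy-estimate-heat} gives $\tau_k^2\Vert v_h^{k+1}\Vert^2 \le \tau_{\max}\,\tau_k\Vert v_h^{k+1}\Vert^2 \le \tfrac{\tau_{\max}}{2}\big(\Vert\nabla u_h^k\Vert^2-\Vert\nabla u_h^{k+1}\Vert^2\big)$. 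Summing over $k$, the energy differences telescope, and $u_h^0\in\mathcal{A}_{h,0}$ means the initial violation is zero, which gives the bound.

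The computation is routine. The points to check are that the $L^2$ term is not weakened by the factor $\alpha$, which is why the estimate carries no $\alpha^{-1}$, and that the nodal orthogonality holds exactly for the interpolated projection.
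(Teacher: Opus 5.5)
Your proposal is correct and takes the same route as the paper, whose proof simply refers back to Proposition~\ref{prop:a-posteriori-energy-stability}: test \eqref{eq:discrete-flow-heat} with $v_h^{k+1}$, absorb the right-hand side of the resulting energy identity using \eqref{eq:stab-criteria-heat}, and then run the nodal telescoping argument of \cite{bartels2016projection} for the constraint violation. Your observation that the full $\tau_k\Vert v_h^{k+1}\Vert^2$ term survives the absorption is exactly the ``minor modification'' the paper cites to explain why \eqref{eq:constraint-violation-heat} carries no factor $\alpha^{-1}$.
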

\begin{proof}
The proof follows that of Proposition \ref{prop:a-posteriori-energy-stability} with a minor modification to account for the new stability criterion. The bound \eqref{eq:constraint-violation-heat} does not depend on $\alpha$ due to the new stability criterion.
\end{proof}

As in Section \ref{sec:harmonic-maps}, the corollary below states an a priori energy estimate for Alg.~\ref{alg:unconstrained-flow-heat}. Similar to the discussion in Remark \ref{rmk:termination}, a consequence of Corollary \ref{cor:l2-flow-stab} is that Alg.~\ref{alg:unconstrained-flow-heat} produces a sequence $\tau_k$ that is bounded from below and always satisfies \eqref{eq:stab-criteria-heat} provided $\alpha <\frac{1}{2}$.

\begin{corollary}[conditional stability of heat flow]\label{cor:l2-flow-stab}
Let $\gamma > 0$ and $0<\alpha<\frac{1}{2}$. Further assume $\tau_k = \tau$ for all $k$. There is a constant $C>0$ such that if
$$
\tau \leq C(1-2\alpha)\gamma h_{\mathrm{min}}^2,
$$
then the sequence $u^k_h$ generated by Alg.~\ref{alg:unconstrained-flow-heat} with initial condition $u_h^0\in \mathcal{A}_{h,0}$ satisfies the energy estimate \eqref{eq:energy-estimate-heat} and control over the unit length constraint violation \eqref{eq:constraint-violation-heat}.
\end{corollary}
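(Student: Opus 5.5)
By Proposition \ref{prop:a-posteriori-energy-stability-heat}, it suffices to show that the a posteriori criterion \eqref{eq:stab-criteria-heat} holds at every step $k$ under the stated restriction on $\tau$. The energy estimate \eqref{eq:energy-estimate-heat} and the constraint bound \eqref{eq:constraint-violation-heat} then follow from that proposition. Unlike Corollary \ref{cor:conditional-stability-h1-flow}, I expect no induction on the energy to be needed. The bound I aim for uses only \eqref{eq:proj-l2-h1-stab} of Lemma \ref{lem:proj-stab}, which involves neither $u_h^k$ nor $\tilde u_h^k$ beyond nodal unit length. That lemma requires $|u_h^k(z)|\geq 1$ at all nodes, so the first step is to check this: from $d_tu_h^{k+1}(z)\cdot u_h^k(z)=0$ we get $|u_h^{k+1}(z)|^2 = |u_h^k(z)|^2+\tau^2|d_tu_h^{k+1}(z)|^2\geq |u_h^k(z)|^2$, and $|u_h^0(z)|=1$ because $u_h^0\in\mathcal{A}_{h,0}$. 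Hence $\tilde u_h^k$ and $P_{h,u_h^k}$ are well defined for every $k$.

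Next, write $v=v_h^{k+1}$ and $q=\mathcal{I}_h(\tilde u_h^k\cdot v)$. Squaring \eqref{eq:proj-l2-h1-stab} and using $(a+b)^2\leq (1+\eta)a^2+(1+\eta^{-1})b^2$ gives
\[
\frac{\tau}{2}\Vert\nabla P_{h,u_h^k}v\Vert^2 \leq \frac{\tau(1+\eta)}{2}\Vert\nabla v\Vert^2 + \frac{\tau(1+\eta^{-1})C^2}{2h_{\mathrm{min}}^{2}}\Vert q\Vert^2 .
\]
We need the right-hand side to be at most $(1-\alpha)\tau\Vert\nabla v\Vert^2+(1-\alpha)\gamma\Vert q\Vert^2$. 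For the first term this requires $(1+\eta)/2\leq 1-\alpha$, i.e.\ $\eta\leq 1-2\alpha$. This is exactly where the hypothesis $\alpha<\tfrac12$ enters, and we take $\eta=1-2\alpha>0$. For the second term we need $\tau(1+\eta^{-1})C^2/(2h_{\mathrm{min}}^2)\leq (1-\alpha)\gamma$. Since $1+\eta^{-1} = 2(1-\alpha)/(1-2\alpha)$, this reduces to $\tau\leq C^{-2}(1-2\alpha)\gamma h_{\mathrm{min}}^2$, which is the stated condition with a renamed constant.

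The main point is this splitting. A naive triangle inequality with $\eta=1$ would force $\alpha\leq 0$, so the Young parameter must be tuned to $1-2\alpha$, and that tuning produces the factor $(1-2\alpha)$ in the step size condition. Since the constant in \eqref{eq:proj-l2-h1-stab} depends only on shape regularity, the bound is uniform in $k$. Therefore \eqref{eq:stab-criteria-heat} holds at every step and Alg.~\ref{alg:unconstrained-flow-heat} never rejects a step, so $\tau_k=\tau$ for all $k$. Applying Proposition \ref{prop:a-posteriori-energy-stability-heat} at each step yields \eqref{eq:energy-estimate-heat}, and summing those estimates together with the nodal orthogonality identity above gives \eqref{eq:constraint-violation-heat}.
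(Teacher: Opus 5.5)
Your proof is correct and follows the paper's argument: you apply \eqref{eq:proj-l2-h1-stab}, then the weighted Young inequality with parameter $1-2\alpha$, and obtain exactly the condition $\tau\le(1-2\alpha)C^{-2}\gamma h_{\mathrm{min}}^2$. You also observe that no energy induction is needed, only preservation of $|u_h^k(z)|\ge 1$ at the nodes, which you verify; this is a small clarification of the paper's appeal to the inductive argument of Corollary \ref{cor:conditional-stability-h1-flow}.
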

\begin{proof}
Similar to the proof of Corollary \ref{cor:conditional-stability-h1-flow}, we must inductively show that \eqref{eq:stab-criteria-heat} is satisfied. We just show the first step as the inductive argument is a repeat of the proof of Corollary \ref{cor:conditional-stability-h1-flow}. We use \eqref{eq:proj-l2-h1-stab} from Lemma \ref{lem:proj-stab} (stability of $P_{h,u_h}$) to write the following bound
$$
\frac{\tau}{2} \Vert \nabla P_{h, u^k_h} v_h^{k+1}\Vert^2 \leq \frac{\tau}{2} \left(\Vert \nabla v^{k+1}_h\Vert +
Ch_{\mathrm{min}}^{-1}\Vert \mathcal{I}_h (v^{k+1}_h\cdot \tilde{u}_h^k)\Vert \right)^2.
$$
Using the bound $(a+b)^2 \leq (1+\epsilon)a^2 + (1+\epsilon^{-1}) b^2$, with $\epsilon = 1-2\alpha$ yields
$$
\frac{\tau}{2} \Vert \nabla P_{h, u^k_h} v_h^{k+1}\Vert^2 \leq \tau (1-\alpha)\Vert \nabla v^{k+1}_h\Vert^2 +
\tau \frac{1 - \alpha}{1-2\alpha} C^2h_{\mathrm{min}}^{-2}\Vert \mathcal{I}_h (v^{k+1}_h\cdot \tilde{u}_h^k)\Vert^2.
$$
The above bound shows that \eqref{eq:stab-criteria-heat} is satisfied provided $\tau \leq (1-2\alpha) C^{-2} h_{\mathrm{min}}^2 \gamma$.
\end{proof}

\subsection{Convergence to harmonic map heat flow}
In this section, we prove that the scheme outlined in Alg.~\ref{alg:unconstrained-flow-heat} with target dimension $d=3$ converges to a solution of harmonic map heat flow assuming quasiuniform meshes, constant step sizes $\tau_k = \tau$, and $\tau \leq C(1-2\alpha)\gamma h^2$. For $0< \alpha < \frac{1}{2}$, the proofs and statements can also be adapted to $d\neq3$ and nonconstant step sizes with minor modifications. Except for Lemma \ref{lem:consistency-error}, the arguments in this section are standard and closely follow those found in the proof of \cite[Theorem 7.7]{bartels2015numerical}. Stating the main theorem requires defining a few interpolants in time.

\begin{definition}[interpolants in time]\label{def:time-interp}
Given a sequence $\{u^k\}_{k=0}^\infty\subset  L^2(\Omega)$, we define its piecewise constant and piecewise linear interpolants in time for $t\in (\tau k , \tau (k+1))$ by
$$
\bar{u}^-_{\tau}(t) = u^k, \quad \bar{u}^+_{\tau}(t) = u^{k+1}, \quad \hat{u}_{\tau}(t) = \frac{\tau (k+1) - t }{\tau} u^k + \frac{t - \tau k}{\tau} u^{k+1}.
$$
\end{definition}

\begin{theorem}[convergence to harmonic map heat flow]\label{thm:convergence-heat-flow}
Fix $d=3$. Assume $\mathcal{T}_h$ is a quasiuniform sequence of meshes, $0<\alpha<\frac{1}{2}$, and $\tau\leq C(1-2\alpha)\gamma h^2$ to satisfy the hypothesis of Corollary \ref{cor:l2-flow-stab}. Assume $\gamma\geq c>0$, and $u_h^0$ uniformly bounded in $H^1(\Omega)$, with $u^0_h\to u_0$ strongly in $L^2(\Omega)$. Define $\bar{u}^-_{h,\tau}$ as the piecewise constant interpolant of the solution $u^k_h$ in time. The sequence $\{\bar{u}^-_{h,\tau}\}_{h, \tau}$ is uniformly bounded in $L^\infty(0,T;H^1(\Omega))$, and any weak-$\star$ $L^\infty(0,T;H^1(\Omega))$ accumulation point $u$ of $\{\bar{u}^-_{h,\tau}\}_{h, \tau}$ satisfies $u\in L^\infty((0,T); \mathcal{A})\cap H^1((0,T); L^2(\Omega))$. Any such point $u$ is a weak solution to harmonic map heat flow in the sense that $u = u_0$ at $t=0$, and
\begin{equation}\label{eq:heat-flow-def}
(\dot{u}, w) + (\nabla u, \nabla w) = 0 \text{ a.e.\ in } (0,T),
\end{equation}
for all $w\in L^\infty((0,T); H^1_{\mathrm{D}}(\Omega))$ that satisfy $w\cdot u = 0$ a.e.\ in $(0,T)\times \Omega$.
\end{theorem}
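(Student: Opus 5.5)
Our plan follows the compactness argument of \cite[Theorem 7.7]{bartels2015numerical}. First we obtain uniform bounds. Under the hypotheses, Corollary \ref{cor:l2-flow-stab} applies at every step. Summing the energy estimate \eqref{eq:energy-estimate-heat} over $k$ then gives
\[
\sup_k \tfrac12\Vert\nabla u_h^k\Vert^2+\sum_k \tau\big(\Vert v_h^{k+1}\Vert^2+\alpha\tau\Vert\nabla v_h^{k+1}\Vert^2+\alpha\gamma\Vert\mathcal I_h(\tilde u_h^k\cdot v_h^{k+1})\Vert^2\big)\le \tfrac12\Vert\nabla u_h^0\Vert^2\le C .
\]
Since $d_tu_h^{k+1}=P_{h,u_h^k}v_h^{k+1}$ and \eqref{eq:proj-l2-l2-stab} holds, we also get $\sum_k\tau\Vert d_tu_h^{k+1}\Vert^2\le C$. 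Together with the boundary data this bounds $\bar u^\pm_{h,\tau}$ in $L^\infty(H^1)$ and $\hat u_{h,\tau}$ in $H^1(L^2)\cap L^\infty(H^1)$. In addition, $\Vert\bar u^+-\bar u^-\Vert_{L^2(L^2)}\le\tau\Vert\partial_t\hat u\Vert_{L^2(L^2)}\to0$. By Aubin--Lions we extract a subsequence such that all three interpolants converge weakly-$\star$ in $L^\infty(H^1)$ and strongly in $L^2(L^2)$ to the same $u$, with $\partial_t\hat u\rightharpoonup\dot u$ in $L^2(L^2)$. The initial condition $u(0)=u_0$ follows from $\hat u(0)=u_h^0\to u_0$.

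Next we show $u\in\mathcal A$. Estimate \eqref{eq:constraint-violation-heat} gives $\Vert\mathcal I_h[|u_h^k|^2-1]\Vert_{L^1}\le C\tau$. Standard interpolation estimates for $|u_h|^2-\mathcal I_h|u_h|^2$, of order $h^2\Vert\nabla u_h\Vert^2$ in $L^1$, combined with strong $L^2$ convergence and a.e.\ convergence along a subsequence, yield $|u|=1$ a.e.

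For the equation, we use the fact from \cite{bartels2015numerical} that for $d=3$ the constrained equation is equivalent to testing with $u\times\phi$, $\phi\in C_c^\infty$ (on $\Gamma_D$ adapted). Concretely, it suffices to show
\[
\int_0^T(\dot u,u\times\phi)+(\nabla u,\nabla(u\times\phi))\,dt=\int_0^T(\dot u,u\times\phi)+(\nabla u,u\times\nabla\phi)\,dt=0 .
\]
Density then gives \eqref{eq:heat-flow-def} for all admissible $w$, since tangent $w$ can be written as $u\times(w\times u)$ by $|u|=1$. Discretely we test \eqref{eq:discrete-flow-heat} with $w_h=\mathcal I_h(\tilde u_h^k\times\phi)$, for which $\tilde u_h^k\cdot w_h=0$ at nodes. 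Hence the $\gamma$-term vanishes and $P_{h,u_h^k}w_h=w_h$, leaving
\[
(v_h^{k+1},w_h)+\tau(\nabla v_h^{k+1},\nabla w_h)=-(\nabla u_h^k,\nabla w_h).
\]

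The main obstacle, and the new ingredient compared with \cite{bartels2015numerical}, is a consistency step. We must replace $v_h^{k+1}$ by $d_tu_h^{k+1}$ in the first term, and $\tilde u_h^k$ by $u_h^k$ inside $w_h$. For the first replacement, nodally $v_h^{k+1}-d_tu_h^{k+1}=\tilde u_h^k(\tilde u_h^k\cdot v_h^{k+1})$, which is orthogonal to $w_h$ at nodes. Thus only a mass-lumping-type quadrature error remains, bounded by $Ch\Vert\mathcal I_h(\tilde u_h^k\cdot v_h^{k+1})\Vert\,\Vert\nabla w_h\Vert$ or by $Ch\Vert v_h^{k+1}\Vert\Vert\nabla w_h\Vert$. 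Summed in time this is $O(h)$, because $\sum\tau\gamma\Vert\mathcal I_h(\tilde u\cdot v)\Vert^2\le C$ with $\gamma\ge c$. The term $\tau(\nabla v_h^{k+1},\nabla w_h)$ is controlled by
\[
\tau^{1/2}\Big(\sum\tau^2\Vert\nabla v\Vert^2\Big)^{1/2}\Big(\sum\tau\Vert\nabla w_h\Vert^2\Big)^{1/2}\lesssim\tau^{1/2}\to0 .
\]
Here $\Vert\nabla w_h\Vert\lesssim(1+\Vert\nabla\tilde u_h\Vert)\Vert\phi\Vert_{W^{1,\infty}}$ by the $H^1$ stability of $\tilde u_h$. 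For the second replacement, $|\tilde u_h^k-u_h^k|\le||u_h^k|^2-1|$ at nodes, since $|u_h^k(z)|\ge1$, so $\tilde u_h^k\to u$ in $L^1$ (hence in $L^2$, being bounded) by the constraint control. The terms $(\nabla u_h^k,\nabla\mathcal I_h(\tilde u_h^k\times\phi))$ we expand as $(\nabla u_h^k,\tilde u_h^k\times\nabla\phi)+(\nabla u_h^k,\nabla\tilde u_h^k\times\phi)$ plus interpolation errors of order $h$. The middle term is handled by writing $\nabla\tilde u_h^k=\nabla u_h^k+\nabla(\tilde u_h^k-u_h^k)$ and using $\nabla u_h^k\cdot(\nabla u_h^k\times\phi)=0$. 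The remaining piece requires showing $\Vert\nabla(\tilde u_h^k-u_h^k)\Vert$ is small or handled via the inverse estimate and $\tau\le C\gamma h^2$, and I expect this to be the delicate point. If that fails, I would instead test with $\mathcal I_h(u_h^k\times\phi)$ and treat the $\gamma$-term using $u_h^k\cdot w_h=0$ nodally, so that $\tilde u_h^k\cdot w_h$ also vanishes. Finally, weak-times-strong convergence passes to the limit in $(\partial_t\hat u,\bar u^-\times\phi)$ and $(\nabla\bar u^-,\bar u^-\times\nabla\phi)$, giving the identity above.
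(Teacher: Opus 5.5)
Your fallback is exactly the paper's argument, and it has to be the main line rather than a contingency: the paper tests \eqref{eq:discrete-flow-heat} with $w_h^k=\mathcal{I}_h[u_h^k\times\phi^k]$ (Lemma~\ref{lem:consistency-error}). With your primary choice $\mathcal{I}_h[\tilde u_h^k\times\phi]$, the leftover term $(\nabla u_h^k,\nabla(\tilde u_h^k-u_h^k)\times\phi)$ is a genuine gap, not just a delicate point. The nodal bound $|\tilde u_h^k(z)-u_h^k(z)|\le |u_h^k(z)|^2-1$, combined with inverse estimates and the energy bound, gives only $\Vert\nabla(\tilde u_h^k-u_h^k)\Vert\lesssim h^{-1-\ell/2}\tau\lesssim\gamma h^{1-\ell/2}$ under $\tau\le C\gamma h^2$. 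This does not vanish for $\ell\ge 2$ and $\gamma\ge c$. Structurally, $(\nabla u_h^k,\nabla\tilde u_h^k\times\phi)$ is controlled elementwise by products of two factors: $\nabla\mathcal{I}_h|u_h^k|$, which tends to zero only weakly in $L^2$, and $\nabla\tilde u_h^k$, which does not converge strongly. So no smallness follows without extra regularity.

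With $w_h=\mathcal{I}_h[u_h^k\times\phi]$ the problem disappears. Because $\partial_iu_h^k\cdot(\partial_iu_h^k\times\phi)=0$ elementwise, you get $(\nabla u_h^k,\nabla\mathcal{I}_h[u_h^k\times\phi])=(\nabla u_h^k,u_h^k\times\nabla\phi)+O(h)$, which passes to the limit by weak--strong convergence. Since $\tilde u_h^k(z)\parallel u_h^k(z)$, the rest of your argument carries over unchanged and then coincides with the paper's:
\begin{itemize}
\item the $\gamma$-term drops and $P_{h,u_h^k}w_h=w_h$;
\item $v_h^{k+1}-d_tu_h^{k+1}$ is nodally orthogonal to $w_h$, so only the quadrature error $Ch^2\Vert\nabla a_{h,\tau}\Vert\,\Vert\nabla w_{h,\tau}\Vert$ remains;
\item $\tau(\nabla v_h^{k+1},\nabla w_h)$ contributes $O(\sqrt\tau)$;
\item compactness, $|u|=1$, and the equivalence with testing by $u\times\phi$ are the standard steps from \cite{bartels2015numerical}.
\end{itemize}
Your version does have one small gain. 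Bounding the quadrature term by $Ch\Vert v_h^{k+1}\Vert\,\Vert\nabla w_h\Vert$ and using $\sum_k\tau\Vert v_h^{k+1}\Vert^2\le C$ makes it $O(h)$ without the $\gamma$-dissipation. So this step does not need $\gamma\ge c$, whereas the paper's bound goes through $\sqrt\gamma\,\Vert\mathcal{I}_h(\tilde u_h^k\cdot v_h^{k+1})\Vert$.
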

\begin{proof}
We should first note that $u\in L^\infty((0,T); \mathcal{A})\cap H^1((0,T); L^2(\Omega))$ is a solution to harmonic map heat flow if and only if
$$
	(\partial_t u , u\times \phi ) + (\nabla u,\nabla  (u\times \phi)) = 0 \text{ for a.e. } t\in (0,T)
$$
for all $\phi \in L^\infty((0,T); H^1_{\mathrm{D}}(\Omega)\cap L^\infty(\Omega))$ \cite[Proposition 7.5]{bartels2015numerical}. 

As a consequence of the a priori bounds in Corollary \ref{cor:l2-flow-stab}, the sequence $\{\bar{u}^-_{h,\tau}\}_{h, \tau}$ is uniformly bounded in $L^\infty(0,T;H^1(\Omega))$ as $h,\tau\to0$. To prove that weak-$\star$ accumulation points of $\{\bar{u}^-_{h,\tau}\}_{h, \tau}$ are elements of $L^\infty((0,T); \mathcal{A})\cap H^1((0,T); L^2(\Omega))$ and satisfy the initial condition, we refer to the standard compactness arguments found in Steps 1--4 of the proof of \cite[Theorem 7.7]{bartels2015numerical}. Finally, the result follows from harmonic map heat flow compactness arguments in the proof of \cite[Theorem 7.7]{bartels2015numerical}, consistency in Lemma \ref{lem:consistency-error}, and a density argument.
\end{proof}

The main difference between the convergence of the unconstrained scheme and the classical projection-free scheme is an extra consistency error, handled in the lemma below.

\begin{lemma}[consistency error] \label{lem:consistency-error}
Let $\phi\in C^\infty([0,T]\times \overline{\Omega})$ and let $\bar{\phi}^-_\tau$ be its piecewise constant interpolation in time. Consider the test function $w_{h,\tau} = \mathcal{I}_h [\bar{u}^-_{h,\tau} \times \bar{\phi}^-_\tau]$. Denote $a_{h,\tau} = \mathcal{I}_h[\bar{\tilde{u}}^-_{h,\tau} \bar{\tilde{u}}^-_{h,\tau} \cdot \bar{v}^+_{h,\tau}]$. For $T = K\tau$, the following equality holds:
\begin{equation}\label{eq:integral-test-phi}
\int_0^T  (\partial_t\hat{u}_{h,\tau}, {w_{h,\tau}})  + (\nabla \bar{u}^-_{h,\tau} ,\nabla {w_{h,\tau}}) dt = - \tau  \int_0^T   (\nabla \bar{v}^+_{h,\tau}, \nabla {w_{h,\tau}})dt
-  \int_0^T (a_{h,\tau}, {w_{h,\tau}}) dt.
\end{equation}
Additionally, if $\tau \leq C(1-2\alpha) \gamma h^2$ to satisfy the hypothesis of Corollary \ref{cor:l2-flow-stab}, and $\gamma\geq c >0$, then the RHS converges to zero as $\tau, h \to0$.
\end{lemma}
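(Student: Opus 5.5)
The plan is to obtain the identity \eqref{eq:integral-test-phi} exactly, step by step, by testing the discrete heat scheme \eqref{eq:discrete-flow-heat} with the admissible test function $w_h = \mathcal{I}_h[u_h^k\times \phi(t_k)]$. I will then estimate the two terms on the right-hand side using the a priori bounds of Proposition \ref{prop:a-posteriori-energy-stability-heat} and Corollary \ref{cor:l2-flow-stab}.

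\textbf{Step 1: the identity.} Fix $k$ and set $w_h=\mathcal{I}_h[u_h^k\times\phi(t_k)]$. At every node $z$, $w_h(z)=u_h^k(z)\times\phi(t_k,z)$ is orthogonal to $u_h^k(z)$. Since $\tilde u_h^k(z)$ is parallel to $u_h^k(z)$, it is orthogonal to $\tilde u_h^k(z)$ as well. Hence $\mathcal{I}_h(\tilde u_h^k\cdot w_h)=0$, which gives:
\begin{itemize}
\item $P_{h,u_h^k}w_h=w_h$;
\item the $\gamma$-term in \eqref{eq:discrete-flow-heat} vanishes.
\end{itemize}
The scheme therefore reduces to
\[
(v_h^{k+1},w_h)+\tau(\nabla v_h^{k+1},\nabla w_h)=-(\nabla u_h^k,\nabla w_h).
\]
Next I use $d_tu_h^{k+1}=P_{h,u_h^k}v_h^{k+1}=v_h^{k+1}-a^{k+1}$, where $a^{k+1}=\mathcal{I}_h[\tilde u_h^k(\tilde u_h^k\cdot v_h^{k+1})]$. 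This turns $(v_h^{k+1},w_h)$ into $(d_tu_h^{k+1},w_h)+(a^{k+1},w_h)$. Multiplying by $\tau$, summing over $k=0,\dots,K-1$, and noting that $\partial_t\hat u_{h,\tau}=d_tu_h^{k+1}$ on $(t_k,t_{k+1})$ gives \eqref{eq:integral-test-phi}.

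\textbf{Step 2: the $\tau$-term.} By Cauchy--Schwarz in space and time,
\[
\tau\int_0^T|(\nabla\bar v^+_{h,\tau},\nabla w_{h,\tau})|\,dt\le \tau^{1/2}\Big(\tau^2\sum_k\|\nabla v_h^{k+1}\|^2\Big)^{1/2}\|\nabla w_{h,\tau}\|_{L^2(0,T;L^2)}.
\]
By \eqref{eq:energy-estimate-heat}, which is available because Corollary \ref{cor:l2-flow-stab} applies, the sum satisfies $\alpha\tau^2\sum_k\|\nabla v_h^{k+1}\|^2\le\tfrac12\|\nabla u_h^0\|^2\le C$. It remains to bound $\|\nabla w_{h,\tau}\|$ uniformly.
\begin{itemize}
\item Elementwise, $u_h^k$ is affine and $\phi$ is smooth. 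The product rule together with a local interpolation estimate gives $\|\nabla\mathcal{I}_h[u_h^k\times\phi]\|\le C\|u_h^k\|_{H^1}\|\phi\|_{W^{2,\infty}}$.
\item $\|u_h^k\|_{H^1}$ is uniformly bounded, using the energy bound and the Dirichlet data (Poincaré).
\end{itemize}
Hence this term is $O((\tau/\alpha)^{1/2})\to0$.

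\textbf{Step 3: the $a$-term (main obstacle).} The difficulty is that the energy estimate only gives boundedness, not smallness: $\alpha\tau\gamma\sum_k\|\mathcal{I}_h(\tilde u_h^k\cdot v_h^{k+1})\|^2\le C$, i.e. $\int_0^T\|\mathcal{I}_h(\bar{\tilde u}^-\cdot\bar v^+)\|^2dt\le C/(\alpha\gamma)$. The decay must therefore come from elsewhere, namely from nodal orthogonality. At each node, $a^{k+1}(z)$ is parallel to $\tilde u_h^k(z)$, while $w_h(z)$ is orthogonal to it, so the mass-lumped product vanishes: $(a^{k+1},w_h)_h=\int\mathcal{I}_h[a^{k+1}\cdot w_h]=0$. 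Consequently $(a^{k+1},w_h)$ equals the quadrature error.

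I would bound this error using the standard elementwise estimate $|(f_h,g_h)-(f_h,g_h)_h|\le Ch^2\|\nabla f_h\|\|\nabla g_h\|$ followed by an inverse inequality on $f_h$, which gives $\le Ch\|f_h\|\|\nabla g_h\|$. The nodal bound gives $\|a^{k+1}\|\le C\|\mathcal{I}_h(\tilde u_h^k\cdot v_h^{k+1})\|$. Combining these,
\[
\Big|\int_0^T(a_{h,\tau},w_{h,\tau})\,dt\Big|\le Ch\,T^{1/2}\sup_t\|\nabla w_{h,\tau}\|\Big(\int_0^T\|\mathcal{I}_h(\bar{\tilde u}^-\cdot\bar v^+)\|^2dt\Big)^{1/2}\le Ch(\alpha\gamma)^{-1/2}.
\]
Since $\gamma\ge c>0$, this tends to zero as $h\to0$.
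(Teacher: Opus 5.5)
Your proposal is correct and follows the paper's proof essentially step for step. You use the same test function $\mathcal{I}_h[u_h^k\times\phi(t_k)]$, whose nodal orthogonality removes both the $\gamma$-term and the projection, and the same $O(\sqrt{\tau})$ bound from the numerical dissipation $\alpha\tau^2\sum_k\|\nabla v_h^{k+1}\|^2$; for the $a_{h,\tau}$ term you use the same quadrature-error estimate, inverse estimate and $\gamma$-weighted energy bound, giving $O(h\gamma^{-1/2})$, and you merely make the $\alpha$-dependence explicit where the paper suppresses it.
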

\begin{proof}
The proof requires 3 steps.

\noindent {\it Step 1. Proof of \eqref{eq:integral-test-phi}}: Test equation \eqref{eq:discrete-flow-heat} with $w^k_h = \mathcal{I}_h [u^k_h \times {\phi}^k]$. After recognizing $P_{h,u^k_h}v^{k+1}_h =  d_t u ^{k+1}_h$ and $\mathcal{I}_h[u^k_h\cdot w^k_h] = 0$, the following holds
\begin{align*}
(d_t u ^{k+1}_h, \mathcal{I}_h [u^k_h \times \phi^{k}]) &=(v^{k+1}_h, \mathcal{I}_h [u^k_h \times \phi^{k}]) - (\mathcal{I}_h[\tilde{u}^k_h \tilde{u}^k_h\cdot v^{k+1}_h], \mathcal{I}_h [u^k_h \times \phi^{k}])\\
&= - (\nabla (u^k_h +\tau v^{k+1}_h),\nabla \mathcal{I}_h [u^k_h \times \phi^{k}])- (\mathcal{I}_h[\tilde{u}^k_h \tilde{u}^k_h\cdot v^{k+1}_h], \mathcal{I}_h [u^k_h \times \phi^{k}])
\end{align*}
Hence, multiplying by $\tau$, summing from $k=0,\ldots, K-1$, and rewriting as an integral from $0$ to $T$ leads to \eqref{eq:integral-test-phi}.

\noindent {\it Step 2. First term on RHS of \eqref{eq:integral-test-phi} vanishes}: The proof that the first term vanishes as $\tau,h\to0$ follows standard arguments for harmonic map heat flow and can be found in Step 5 of the proof of \cite[Theorem 7.7]{bartels2015numerical}. We will summarize for completeness. The ingredients of the proof include showing $w_{h,\tau}$ is uniformly bounded in $L^2((0,T);H^1(\Omega))$ as $\tau,h\to0$ and using the numerical dissipation in \eqref{eq:energy-estimate-heat} to show that $\Vert \tau^{1/2} \bar{v}^+_{h,\tau}\Vert_{L^2((0,T);H^1(\Omega))}$ is uniformly bounded. Putting it all together, $\int_0^T  \tau (\nabla \bar{v}^+_{h,\tau}, \nabla w_{h,\tau} )dt$ is $O(\sqrt{\tau})$, and hence the first term vanishes in the limit.

\noindent {\it Step 3. Second term on RHS \eqref{eq:integral-test-phi} vanishes}: For notational brevity, let $X := L^2((0,T); L^2(\Omega))$. Recall that $\bar{\tilde{u}}^-_{h,\tau} \cdot (\bar{u}^-_{h,\tau} \times \bar{\phi}^-_\tau)=0$ on nodes, which implies $\mathcal{I}_h [a_{h,\tau}\cdot w_{h,\tau}] = 0$. Hence, the second term of the RHS of \eqref{eq:integral-test-phi} is bounded by:
\begin{align*}
\bigg| \int_0^T (a_{h,\tau}, w_{h,\tau}) dt\bigg| &\leq \Vert a_{h,\tau}\cdot w_{h,\tau} - \mathcal{I}_h [a_{h,\tau}\cdot w_{h,\tau}] \Vert_{L^1((0,T); L^1(\Omega))} \leq Ch^2 \Vert \nabla a_{h,\tau}\Vert_{X} \, \Vert \nabla w_{h,\tau}\Vert_{X},
\end{align*}
where the last interpolation estimate is proven using arguments similar to \cite[Lemma 2.1]{bartels2016projection} and Lemma \ref{lem:proj-stab}.
Using the uniform boundedness of $\Vert \nabla w_{h,\tau}\Vert_{X}$, the identity $a_{h,\tau} = \mathcal{I}_h[\bar{\tilde{u}}^-_{h,\tau} \bar{\tilde{u}}^-_{h,\tau} \cdot \bar{v}^+_{h,\tau}]$, an inverse estimate, and a nodal bound $|\bar{\tilde{u}}^-_{h,\tau}(z)| \leq 1$ leads to
$$
\Vert \nabla a_{h,\tau}\Vert_{X} \leq \frac{C}{h_{\mathrm{min}}\sqrt{\gamma}}  \sqrt{\gamma} \Vert \mathcal{I}_h ( \bar{\tilde{u}}^-_{h,\tau} \cdot \bar{v}^+_{h,\tau})\Vert_{X}.
$$
The energy estimate \eqref{eq:energy-estimate-heat} implies $\sqrt{\gamma} \Vert \mathcal{I}_h ( \bar{\tilde{u}}^-_{h,\tau} \cdot \bar{v}^+_{h,\tau})\Vert_{X}$ is uniformly bounded. If $\gamma\geq c>0$ and $\mathcal{T}_h$ is quasiuniform, then $h^2 \Vert \nabla a_{h,\tau}\Vert_{X} \lesssim c^{-1/2} h \to0$ as $h\to 0$, which completes the proof. 
\end{proof}

\section{Computations}\label{sec:computations}

In this section, we explore computational performance of the new scheme compared with the projection-free scheme outlined in Section \ref{sec:intro-proj-free}. For the projection-free scheme, we solve the typical discrete version of \eqref{eq:saddle-point-problem} for $(d_tu^k_h, \lambda^k_h)\in T\mathcal{A}_{h,\mathrm{D}}(u_h^k)\times \mathcal{S}_\mathrm{D}^1(\mathcal{T}_h;\mathbb{R})$ using MinRes for the saddle point system, combined with an augmented Lagrangian preconditioner similar to \cite{xia2021augmented}. We stop the iteration when $\Vert d_tu^k_h\Vert_* < \varepsilon$ or when $t_k \geq T$, just like Alg.~\ref{alg:unconstrained-flow} and Alg.~\ref{alg:unconstrained-flow-heat}. Both schemes were implemented using NGSolve \cite{schoberl2014c++}, and the computations in this section were run on a computer with an Apple M2 Pro CPU with 32GB RAM. While comparing runtimes can be sensitive to implementation details, we believe the computational evidence for the improved efficiency in this section is conclusive, and point to the dissertation of the third author \cite{palus2024finite} for more computational evidence of the improved efficiency of this scheme.

\subsection{Example 1: smooth harmonic map heat flow}\label{sec:conv-study}

This example is a convergence study of Alg.~\ref{alg:unconstrained-flow-heat}, following the example of \cite[Section 9.2]{akrivis2021higher}. We consider the time interval $[0,T] = [0, .2]$ and spatial domain $\Omega = (0,1)^2$. After defining $\beta(t) = (T +0.1)/(T +0.1 - t)$ and $d(x) = (x_1 - 1/2)^2+ (x_2-1/2)^2$, the manufactured solution $u:[0,T]\times \Omega \to \mathbb{R}^3$ with $A = 100$ is
$$
    u(t,x) =
    \begin{cases}
        \frac{A}{2} \mathrm{e}^{-\frac{\beta(t)}{1 / 4-d(x)}}\tilde{\nabla d(x)} \;
        						+\;\big(1-A^{2} \mathrm{e}^{-2 \frac{\beta(t)}{1 / 4-d(x)}} d(x)\big)^{1/2} \; e_3, & d(x) \leq \frac{1}{4} \\
        e_3, & \text{ otherwise }
    \end{cases},
$$
where $\tilde{\nabla d(x)} = (\nabla d(x)^\top, 0)^\top$.
Clearly, $|u(t,x)| = 1$ for all $(t,x)$. The Dirichlet boundary data is $g = e_3$ on $\partial\Omega$ and the RHS forcing is
$
	f := u_t - \Delta u - |\nabla u|^2u.
$
We denote the discrete solution as $u_h$, which is defined at time steps $t_j = \tau j$. Fig.~\ref{fig:convergence-study} plots the discrete errors
\begin{align*}
    \Vert u_h - u\Vert^2_{L^2_\tau(0,T;H^1(\Omega))} &= \tau\sum_{j=0}^{T/\tau} \Vert u_h(t_j) - u(t_j)\Vert_{H^1(\Omega)}^2, \\
     \Vert u_h - u\Vert_{L^\infty_\tau(0,T;L^2(\Omega))} &= \max_{j=0, \ldots, T/\tau} \Vert u_h(t_j) - u(t_j)\Vert_{L^2(\Omega)}
\end{align*}
for $h = 2^{-k}$ with $k=2,\ldots, 8$, $\tau=\frac{4}{5} h, \frac{16}{5} h^2$, and $\gamma = h^{0},h^{-1}, h^{-2}$. In Fig.~\ref{fig:convergence-study}, we see that the errors empirically satisfy $\Vert u_h - u\Vert_{L^2_\tau(0,T;H^1(\Omega))} \leq C(h + \tau)$ and $\Vert u_h - u\Vert_{L^\infty_\tau(0,T;L^2(\Omega))} \leq C(h^2 + \tau)$, which is optimal for piecewise affine elements and implicit time-stepping. 
\begin{figure}[h]
    \includegraphics[width=.85\textwidth]{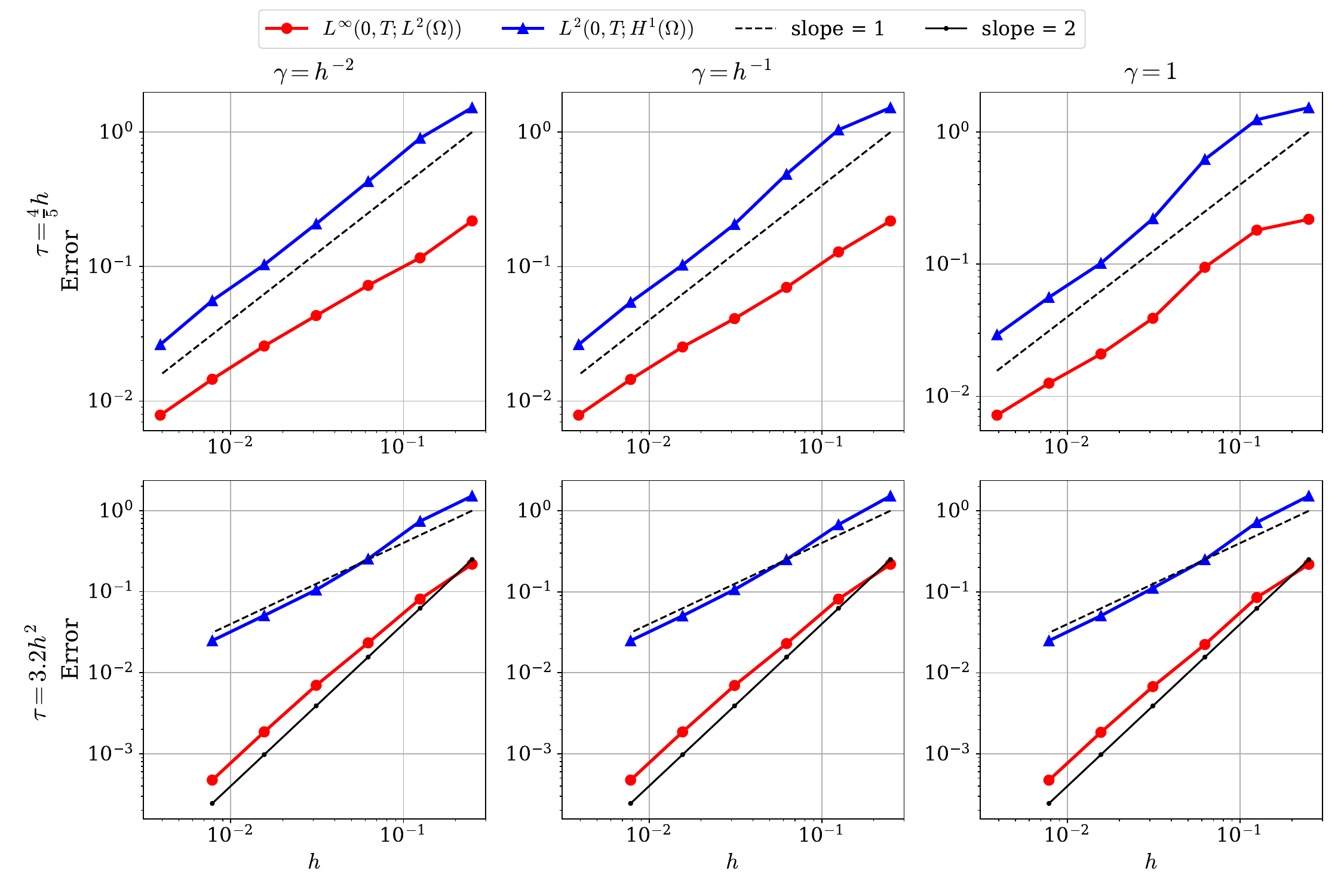}
    \caption{Example 1 (smooth harmonic map heat flow): Error between the discrete solution $u_h(t)$ and the exact solution $u(t)$ in $L^2_\tau(0,T;H^1(\Omega))$ and $L^\infty_\tau(0,T; L^2(\Omega))$ norms vs $h$ for different relationships of $\gamma$ and $\tau$. First row is $\tau = \frac{4}{5} h$ and second row is $\tau = \frac{16}{5} h^2$. The columns (left to right) correspond to $\gamma = h^{-2}, h^{-1}, 1$ respectively. The reference slopes of 1 and 2 show empirically that $\Vert u_h - u\Vert_{L^2_\tau(0,T;H^1(\Omega))} \leq C(h + \tau)$ and $\Vert u_h - u\Vert_{L^\infty_\tau(0,T;L^2(\Omega))} \leq C(h^2 + \tau)$, which is optimal for piecewise affine elements and implicit time-stepping.}
    \label{fig:convergence-study}
\end{figure}

\subsection{Example 2: singular heat flow}

Following the example of \cite[Example 5.1]{barrett2007convergent}, let $\Omega = (-1,1)^2$. We seek a harmonic map heat flow $u:[0, .5] \times \Omega \to \mathbb{R}^3$ with initial condition written in polar coordinates $(r, \varphi)$ as
$$
u_0(x) = \big(\cos(\varphi)\sin(\phi(r)),\; \sin(\varphi)\sin(\phi(r)),\; \cos(\phi(r))\big)^\top, \quad \phi(r) = \frac{3\pi}{2} r^2.
$$
This problem experiences a finite time blowup in the sense that there is a time $t_s<\infty$ where $\Vert \nabla u(t_s,\cdot)\Vert_{L^{\infty}(\Omega)} = \infty$ \cite{chang1992finite}. Due to the singularity, the inf-sup constant of the saddle point system \eqref{eq:saddle-point-problem} of the projection-free scheme will degenerate near $t_s$. Hence, a scheme that only requires SPD solves is particularly attractive for this example.

In these experiments, the mesh has $h_{\mathrm{max}} = 1/16$ near the outer edges of the square and $h_{\mathrm{min}} = 1/64$ in a disk of radius $1/4$ centered at the origin to resolve the singularity. This mesh is shown in Fig.~\ref{fig:blowup}. We then test multiple values of $\tau_{\mathrm{max}}$ and $\alpha=0.5,0.9$ for both  Alg.~\ref{alg:unconstrained-flow-heat} with $\gamma = h_{\mathrm{min}}^{-1}$ and the projection-free flow outlined in Section \ref{sec:intro-proj-free}. Although $\alpha = 0.5,0.9$ do not satisfy the hypothesis required for Corollary~\ref{cor:l2-flow-stab} and Theorem~\ref{thm:convergence-heat-flow}, the stability criterion \eqref{eq:stab-criteria-heat} is still enforced at every step, so the computed iterates still satisfy the energy estimate and constraint-violation bound of Proposition \ref{prop:a-posteriori-energy-stability-heat}. Only the a priori lower bound on $\tau_k$ is not guaranteed. These computations will show that aggressive step-size decreases appear valuable near singularities.

Fig.~\ref{fig:blow-up-defects} shows the discrete solutions to the different schemes at their blowup times $t_s$ on the subdomain $(-0.1,0.1)^2$ for $\tau, \tau_{\mathrm{max}} = 0.007812$. The continuous solution in this example is symmetric about the origin. In the case of $\alpha = 0.5, 0.9$, the discrete solutions are nearly symmetric and are qualitatively similar to the projection-free solution. However, the solution computed by Alg.~\ref{alg:unconstrained-flow-heat} with constant time steps lacks the same symmetry. These results indicate that using the variable time-stepping scheme helps to preserve properties of the harmonic map heat flow solution.

Table \ref{tab:blowup-table0} shows the reported data. We see that the unconstrained flow with constant step sizes performs very similarly to the classical projection-free scheme but is approximately 6 to 7 times more efficient in terms of computation time. As $\alpha$ increases to $0.5$ and $0.9$, we see an increase in the number of steps, $N$, with the additional benefit of a much smaller unit length constraint violation in $L^\infty$ and $L^1$ when $\alpha=0.9$. Additionally, Alg.~\ref{alg:unconstrained-flow-heat} with $\alpha=0.9$ is at most $2\times$ slower than the projection-free scheme, but the new scheme has $5\times$ to $7\times$ smaller unit length constraint violation errors. Compared to both the projection-free scheme and Alg.~\ref{alg:unconstrained-flow-heat} with constant step sizes, Alg.~\ref{alg:unconstrained-flow-heat} with $\alpha=0.9$ also has a less $\tau$-dependent singular time $t_s$. In Fig.~\ref{fig:blowup-charts}, the $W^{1,\infty}$ seminorm of the solution starts to increase and the upper bounds in Lemma \ref{lem:proj-stab} suggest that the projection $P_{h,u_h}$ potentially loses stability. As a result, Alg.~\ref{alg:unconstrained-flow-heat} reduces $\tau_k$.

\begin{figure}
\includegraphics[height=4cm]{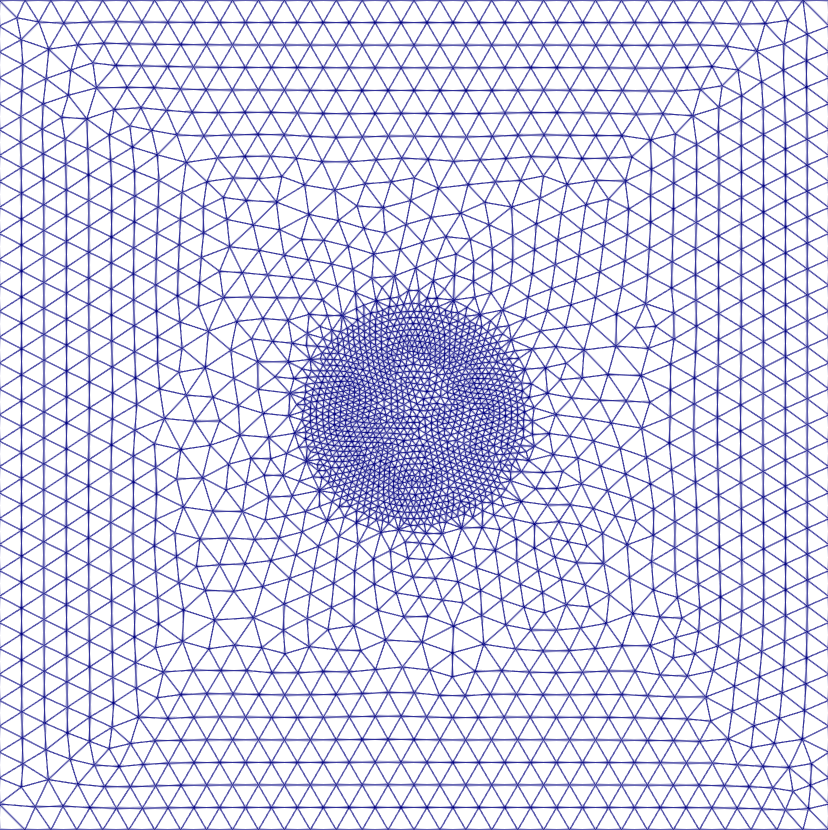}
\includegraphics[height=4cm]{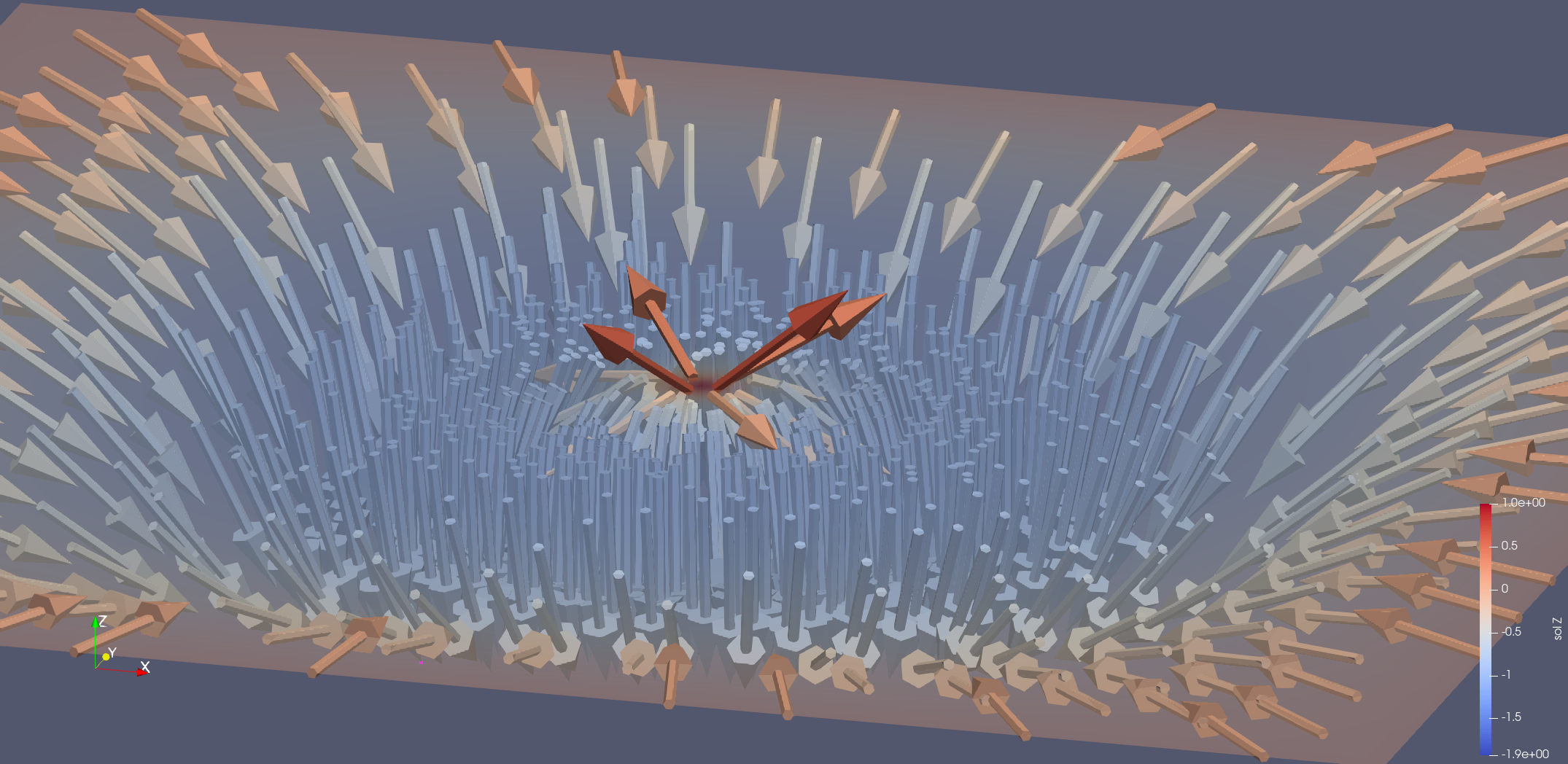}
\caption{Example 2 (singular heat flow). (Left): Mesh of $\Omega$.  (Right): Director field immediately before singularity formation.}
\label{fig:blowup}
\end{figure}
\begin{figure}
\includegraphics[height=6cm]{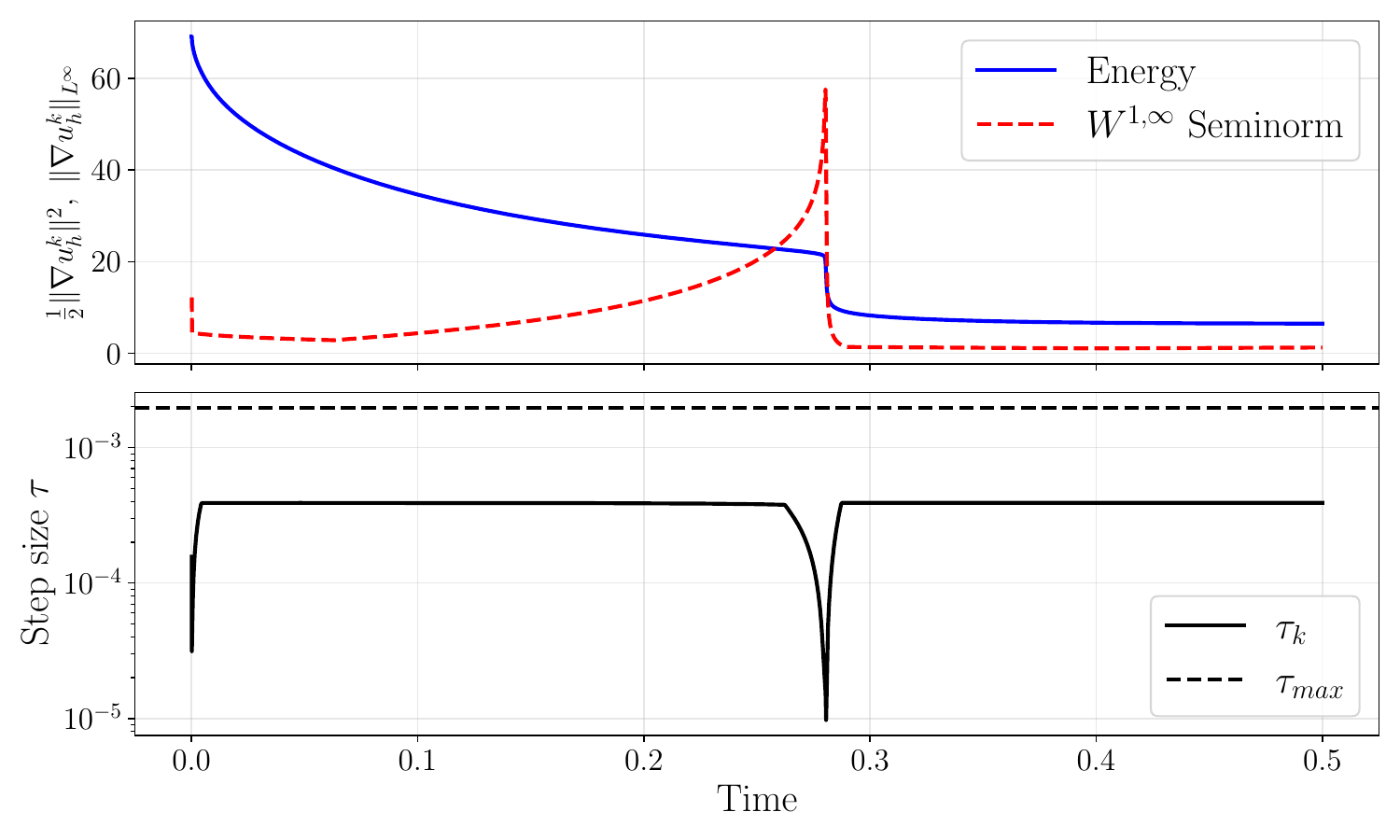}
\caption{Example 2 (singular heat flow). Top to bottom: Energy and $W^{1,\infty}$ seminorm of the solution vs time for the unconstrained flow with $\alpha = .9$ and $\tau_{\mathrm{max}} = 0.001953$. Semilog plot of stepsize $\tau_k$ vs time. We see that as the $W^{1,\infty}$ seminorm of the solution starts to increase, Alg.~\ref{alg:unconstrained-flow-heat} detects the loss of stability of the projection $P_{h,u_h}$, which matches the upper bound seen in Lemma \ref{lem:proj-stab}. Additionally, when $\tau_k$ is larger, we see that $\tau_k \approx \frac{\tau_{\mathrm{max}}}{10}$. This is because the predicted velocity $v^{k+1}_h$ computed with step size $\tau_{\mathrm{max}}$ does not satisfy the stability condition \eqref{eq:stab-criteria-heat}, and the algorithm decreases $\tau_k$ to $\approx (1-\alpha)\tau_{\mathrm{max}} = \frac{\tau_{\mathrm{max}}}{10}$.}
\label{fig:blowup-charts}
\end{figure}

\begin{figure}
\includegraphics[width=\textwidth]{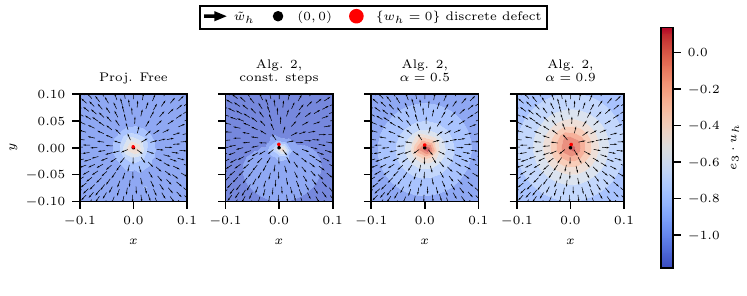}
\caption{Example 2 (singular heat flow). Discrete solutions at blowup time $t_s$ on subdomain $(-0.1,0.1)^2$ for $\tau, \tau_{\mathrm{max}} = 0.007812$. The black dots display the origin, and the red dots show the set $\{ w_h = 0\}$ of the projected vector field $w_h = u_h - (u_h\cdot e_3) e_3$, which serves as a discrete topological defect that should be located at the origin. The color map shows $e_3\cdot u_h$, and the vector field displays the normalized field $\tilde{w}_h$. Left to Right: The projection-free solution, Alg.~\ref{alg:unconstrained-flow-heat} solution with constant time steps and variable time steps with $\alpha = 0.5, 0.9$.}
\label{fig:blow-up-defects}
\end{figure}

\begin{table}
    \centering
    \caption{Example 2 (singular heat flow). Table of $\tau$, discrete blowup time $t_s$, unit length constraint violations, number of iterations, final energy, and computation time for different schemes. Using Alg.~\ref{alg:unconstrained-flow-heat} with constant step sizes leads to an approximately $6\times$--$7\times$ speed-up. With variable step sizes, $\alpha=0.5,0.9$, there is a trade-off between number of iterations and the unit length constraint violation.}
    \label{tab:blowup-table0}
    \input{data/blowup_level0_combined.tex}
\end{table}

\subsection{Example 3: liquid crystal shell}

This example considers a 3D domain $\Omega = B_1((0,0,.15))\setminus B_{1/2}((0,0,0))$, which represents the body of a shell of liquid crystal. In experiments \cite{fernandez2007novel, lopez2011frustrated}, the liquid crystal is immersed in a fluid like water and a droplet of water can be immersed inside the shell. At the interface of water and liquid crystal ($\partial\Omega$ in this case), one often expects tangential anchoring of the liquid crystal, which forces topological defects to form at the interface. The total topological charge of these defects will add up to 2 on each sphere for topological reasons. There are four $+1/2$ defects on each sphere in the case of thin shells \cite{fernandez2007novel}, which makes a director field model inappropriate. However, in the case of thick shells, experiments show that there can be two $+1$ defects on each sphere \cite{lopez2011frustrated}. Director field models can describe this precise situation.

The total energy of the liquid crystal with a weak tangential anchoring surface energy is
$$
E[u] = \frac{1}{2} \int_\Omega |\nabla u|^2 dx + c_{\mathrm{pen}} E_{\mathrm{anch}}[u], \quad E_{\mathrm{anch}}[u] = \frac{1}{2} \int_{\partial\Omega} (u\cdot \nu)^2  d S.
$$
Since $E_{\mathrm{anch}}$ is quadratic, treating this term implicitly leads to a stable projection-free scheme as well as a simple modification of Alg.~\ref{alg:unconstrained-flow}. In this example, we set $c_{\mathrm{pen}} = 100$, and we set $\alpha = 0.9, \gamma=0,$ and $\tau_{\mathrm{max}} = 1$ as the numerical parameters for Alg.~\ref{alg:unconstrained-flow}. For both Alg.~\ref{alg:unconstrained-flow} and the projection-free scheme, we consider the stopping tolerance $\varepsilon = 5\times 10^{-3}$. Also, we use the full $H^1$ norm for the flow metric $(u,v)_* = (u,v)_{H^1(\Omega)}$ due to the lack of Dirichlet boundary conditions. For the projection-free scheme and Alg.~\ref{alg:unconstrained-flow} with constant $\tau$, we computed a time step $\tau$ using a root finding algorithm so that the $L^1$ unit length constraint violation matches that of Alg.~\ref{alg:unconstrained-flow} with $\alpha=0.9$ on a coarser mesh. Despite setting $\gamma=0$, we did not decouple the vector component solves in \eqref{eq:discrete-flow} in Alg.~\ref{alg:unconstrained-flow}. However, the iterative solution of \eqref{eq:discrete-flow} in the case $\gamma=0$ is easily parallelized.

Fig.~\ref{fig:lc-shell} (left) shows a slice of the domain $\Omega$ with the two confining spheres as well as the solution $u_h^\infty$ on the spheres. We can see that +1 defects form on each sphere. Fig.~\ref{fig:lc-shell} (right) shows the step size history. Although $\tau_{\mathrm{max}} = 1$, the algorithm never increases $\tau_k$ to $1$ in order to maintain stability.

Table \ref{tab:lc-shell} reports the minimum and maximum step size, final unit length constraint error, final energy, number of iterations, and computation time for the unconstrained scheme and the projection-free scheme. Alg.~\ref{alg:unconstrained-flow} with constant step sizes provides a $3.9\times$-$7.6\times$ speed-up in computation time over the projection-free scheme for comparable $\Vert \mathcal{I}_h |u_h^\infty|^2 - 1\Vert_{L^1}$ and $E[u^\infty_h]$. The speed-up factor improves with mesh refinement, which comes from solving SPD systems in the unconstrained scheme compared with solving saddle point systems in the classical projection-free scheme. The projection-free scheme and Alg.~\ref{alg:unconstrained-flow} with constant step sizes take a similar number of iterations, but the cost per iteration of Alg.~\ref{alg:unconstrained-flow} is about $6\times$ smaller for $\#\mathrm{DOF}=32178$.

Also for $\#\mathrm{DOF}=32178$, the unconstrained scheme with adaptive time-stepping and $\alpha = 0.9$ reduced the computation time by 48$\times$ over the projection-free scheme and provided a $6\times$ cost reduction over Alg.\ \ref{alg:unconstrained-flow} with constant step sizes. The cost per iteration for $\alpha=0.9$ is about $2\times$ larger than for constant step sizes. This is because the time step increase of Alg.\ \ref{alg:unconstrained-flow} is often too aggressive, which requires recomputing and decreasing $\tau$ in the next step. This extra computation effectively doubles the cost per iteration over constant step sizes. Despite this extra cost, Alg.\ \ref{alg:unconstrained-flow} with $\alpha = 0.9$ can substantially increase the step size when stability is less binding, and the optimization takes many fewer iterations. In the case of this LC shell example, $\tau_k\approx 0.1$ for large $k$, cf.\ Fig.~\ref{fig:lc-shell}.

\begin{figure}
\includegraphics[height=4cm]{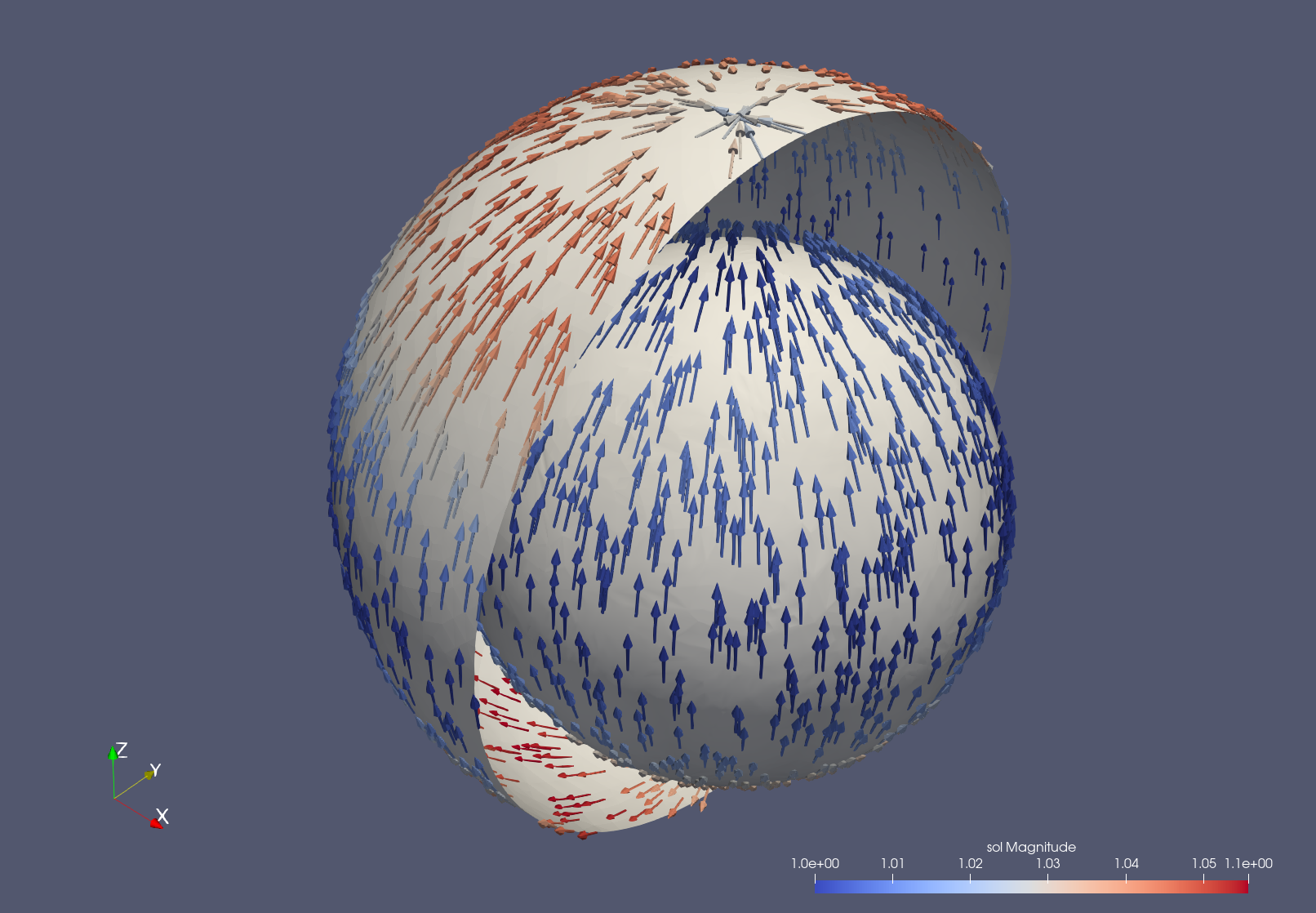} \includegraphics[height=4cm]{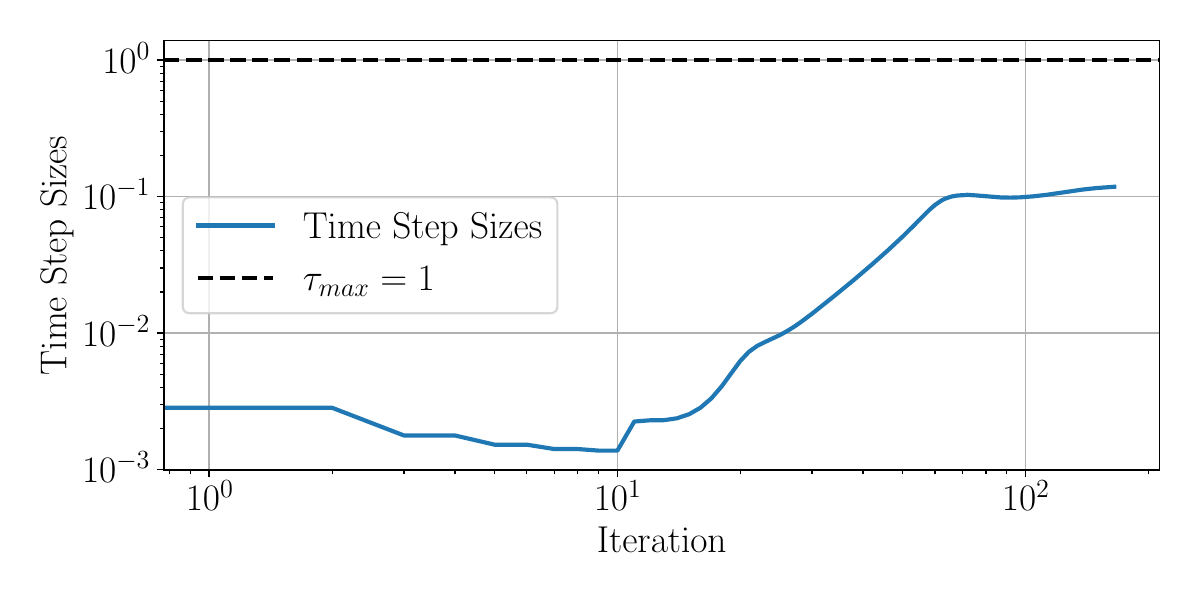}
\caption{Example 3 (LC shell). Left: slice of the domain $\Omega$ with the two confining spheres and the solution $u_h^\infty$ on the spheres produced by Alg.~\ref{alg:unconstrained-flow} on the finest mesh. The solution has two $+1$ defects on each sphere for a total of four defects. Right: Log-log plot of step size vs iteration of Alg.~\ref{alg:unconstrained-flow} with $\tau_{\mathrm{max}}=1, \alpha=0.9$ on the finest mesh.}
\label{fig:lc-shell}
\end{figure}

\begin{table}
\centering
\caption{Example 3 (LC shell). Number of degrees of freedom $\#\mathrm{DOF} = \#\mathrm{DOF}(\mathcal{S}^1(\mathcal{T}_h;\mathbb{R}^3))$,  minimum and maximum step size, final unit length constraint error, final energy, number of iterations, $N$, computation time, and average computation time per iteration for the LC shell experiment. The last row for the projection-free scheme is omitted due to compute time.}
\label{tab:lc-shell}

\begin{tabular}{cccccccc}
\toprule
 $\#\mathrm{DOF}$ & $\min \tau_k$ & $\max \tau_k$ & $\Vert \mathcal{I}_h |u_h^\infty|^2 - 1 \Vert_{L^1}$ & $N$ & $E[u^\infty_h]$ & Time (s) & Time/iteration (s) \\
\hline
\multicolumn{8}{c}{projection-free scheme with $\tau = 0.00837$ } \\
\hline
798 & 0.00837 & -- & 0.137 & 10523 & 14.1 & 508 & 0.0483\\
4776 & 0.00837 & -- & 0.123 & 4814 & 12.9 & 1378 & 0.286\\
32178 & 0.00837 & -- & 0.120 & 2381 & 10.8 & 8458 & 3.55\\
233982 	& -- & -- & -- & -- & -- & -- & --\\
\hline
\multicolumn{8}{c}{Alg.~\ref{alg:unconstrained-flow} with constant step size $\tau =0.00680$ } \\
\hline
798 		& 0.00680 & -- & 0.135 & 8376 & 14.1 & 112 & 0.0134\\
4776 	& 0.00680 & -- & 0.123 & 4170 & 12.9 & 354 & 0.0849\\
32178 	& 0.00680 & -- & 0.120 & 1894 & 10.9 & 1110 & 0.586\\
233982 	& 0.00680 & -- & 0.120 & 1711 & 10.3 & 9310 & 5.44\\
\hline
\multicolumn{8}{c}{Alg.~\ref{alg:unconstrained-flow} with $\alpha = 0.9, \tau_{\mathrm{max}} = 1$} \\
\hline
798 		& 0.00141 & 0.122 & 0.142 & 545 & 14.2 & 14 & 0.0257\\
4776 	& 0.00139 & 0.136 & 0.120 & 288 & 12.9 & 41 & 0.142\\
32178 	& 0.00138 & 0.155 & 0.114 & 162 & 10.8 & 176 & 1.086\\
233982 	& 0.00138 & 0.118 & 0.112 & 168 & 10.3 & 1865 &  11.1 \\
\bottomrule
\end{tabular}
\end{table}
\section{Computational extension to plate bending}\label{sec:plate-bending}
In this section, we computationally demonstrate the effectiveness of Alg.~\ref{alg:unconstrained-flow} when applied to a nonlinear plate bending problem. An analysis of the scheme in this setting is left to future work. The computations presented in the following were run on an Intel\textsuperscript{\textregistered} notebook featuring a Core\textsuperscript{\texttrademark} i5-7300U 2.6\,GHz CPU with 16\,GB RAM.

\subsection{Isometric plate bending}
Let $\Omega \subset \mathbb{R}^2$ be a bounded domain describing a plate in its flat reference configuration. On a subset $\Gamma_D \subset \partial\Omega$, clamped boundary conditions are imposed: any deformation $y$ and its gradient are required to attain prescribed values $y_D$ and $\Phi_D$ on $\Gamma_D$. Writing
\[
H^2_{y_D,\Phi_D}(\Omega;\mathbb{R}^3) = \{ y \in H^2(\Omega;\mathbb{R}^3) : y = y_D \text{ and } \nabla y = \Phi_D \text{ on } \Gamma_D\}
\]
for the space incorporating the boundary conditions, and denoting by $H^2_D(\Omega;\mathbb{R}^3)$ the corresponding homogeneous space, the admissible set of isometric deformations is
\begin{equation*}
\mathcal{A}_{y_D, \Phi_D} = \{ y \in H^2_{y_D,\Phi_D}(\Omega;\mathbb{R}^3) : \nabla y^\top \nabla y = I_{2\times 2} \text{ a.e.\ in } \Omega\}.
\end{equation*}
Given a body force $f : \Omega \to \mathbb{R}^3$, the equilibrium deformation minimizes the bending energy
\begin{equation*}
E_{\mathrm{bend}}[y] = \int_\Omega \frac{1}{2}|\nabla \nabla y|^2 - f\cdot y \; dx \quad \text{subject to } y \in \mathcal{A}_{y_D, \Phi_D}.
\end{equation*}
To discretize the fourth order problem, we employ Discrete Kirchhoff Triangle (DKT) elements~\cite{braess2007finite}, whose finite element space is a subspace of the continuous piecewise cubics:
\begin{equation*}
\mathbb{Y}_h = \{ y_h \in C^0(\overline{\Omega}; \mathbb{R}^3) : y_h|_T \in \mathcal{P}_3^{\mathrm{red}}(T) \text{ and } \nabla y_h \text{ is continuous at every node } z\in \mathcal{N}_h\}.
\end{equation*}
The superscript in $\mathcal{P}_3^{\mathrm{red}}(T)$ signifies that one of the ten degrees of freedom is eliminated by prescribing its value in terms of the other degrees of freedom, such that each function $y_h \in \mathbb{Y}_h$ is determined by its nodal values and nodal gradients.
To include clamped boundary conditions, we define the space
\begin{equation*}
\mathbb{Y}_{h,y_D,\Phi_D} = \{ y_h \in \mathbb{Y}_h : y_h(z) = y_D(z) \text{ and } \nabla y_h(z) = \Phi_D(z) \text{ for all } z\in \mathcal{N}_h\cap \Gamma_D\}.
\end{equation*}

A key feature of the DKT element is the existence of a discrete gradient operator $\nabla_h$ which approximates gradients of functions from $\mathbb{Y}_h$ in the space of continuous piecewise quadratic polynomials, see~\cite{braess2007finite} for its construction.
We may exploit this approximation and define the discrete energy via
\begin{equation*}
E_{\mathrm{bend}, h}[y_h] = \int_\Omega\frac{1}{2} |\nabla \nabla_h y_h|^2 - f\cdot y_h \; dx.
\end{equation*}
Following the approach for harmonic maps with a nodal imposition of the constraint, the discrete admissible set is given by
\begin{equation*}
\mathcal{A}_{h, y_D, \Phi_D} = \{ y_h \in \mathbb{Y}_{h,y_D,\Phi_D} : \nabla y_h(z)^\top \nabla y_h(z) = I_{2\times 2} \text{ for all } z\in \mathcal{N}_h \}.
\end{equation*}
Denoting the discrete homogeneous space by
\begin{equation*}
\mathbb{Y}_{h,D} = \{ y_h \in \mathbb{Y}_h : y_h(z) = 0 \text{ and } \nabla y_h(z) = 0 \text{ for all } z\in \mathcal{N}_h\cap \Gamma_D\},
\end{equation*}
the tangent space at $y_h \in \mathcal{A}_{h, y_D, \Phi_D}$ is obtained by linearizing the nodal isometry constraint and is given by
\begin{equation*}
T\mathcal{A}_{h, y_D, \Phi_D}(y_h) = \{ v_h \in \mathbb{Y}_{h,D} : \nabla y_h(z)^\top \nabla v_h(z) + \nabla v_h(z)^\top \nabla y_h(z) = 0 \text{ for all } z\in \mathcal{N}_h \}.
\end{equation*}
The approximation spaces are identical to those used in the classical projection-free scheme for nonlinear plate bending presented in~\cite{bartels2013bending}, which serves as the baseline in the experiment.

In order to design an unconstrained flow that avoids the saddle-point structure of the projection-free scheme, we construct a nodal tangent projection for DKT functions.
\begin{definition}[tangent projection operator]\label{def:tangent-proj-dkt}
For $v_h \in \mathbb{Y}_h$, we define $w_h := \Pi_{h, y_h} v_h$ on each node $z_i \in \mathcal{N}_h$ by
\begin{align*}
w_h(z_i) = v_h(z_i),
\qquad
\nabla w_h(z_i) = \mathop{\mathrm{argmin}}_{\substack{B \in \mathbb{R}^{3\times 2}\\ \nabla y_h(z_i)^\top B + B^\top \nabla y_h(z_i) = 0}} |B - \nabla v_h(z_i)|^2.
\end{align*}
\end{definition}
Note that the constrained least-squares problem above can be solved at low cost by inverting a single $3\times 3$ system at each node.

With the tangent projection in hand, Alg.~\ref{alg:unconstrained-flow} carries over to the plate bending problem via natural substitutions: the harmonic-map admissible set $\mathcal{A}_{h,\delta}$ is replaced by $\mathcal{A}_{h, y_D, \Phi_D}$, the tangent projection $P_{h, u_h}$ by $\Pi_{h, y_h}$, and both the $H^1$-inner product $(\nabla u_h, \nabla w_h)$ and the gradient flow metric $(u_h, w_h)_*$ by the discrete $H^2$-inner product $(\nabla \nabla_h y_h, \nabla \nabla_h w_h)$; the body force enters the right-hand side as the additional contribution $(f, w_h)$.
Since the gradient flow metric coincides with the energy norm, we only consider the unstabilized case $\gamma = 0$ in the following.
Each step of the resulting algorithm requires the solution of a single symmetric positive definite system whose matrix is independent of $\tau$ and can therefore be factorized once and reused throughout the iteration.

\subsection{Computational example}\label{subsec:square-plate}
We consider a square plate of side length~$4$, clamped along two adjacent sides and subjected to a constant vertical body force; see Figure~\ref{fig:square-plate}. To this end, let $\Omega = (0,4) \times (0,4)$ with clamped boundary $\Gamma_D = \bigl(\{0\} \times [0,4]\bigr) \cup \bigl([0,4] \times \{0\}\bigr)$, boundary data $y_D(x) = (x_1, x_2, 0)^\top$ and $\Phi_D = [I_2,\,0]^\top$, and body force $f = (0, 0, 0.5)^\top$. The initial deformation~$y_h^0$ is the nodal interpolant of the flat configuration~$y_D$, which satisfies the discrete isometry constraint exactly. We compare Alg.~\ref{alg:unconstrained-flow} adapted to plate bending against the classical projection-free scheme of Section~\ref{sec:intro-proj-free} in its plate-bending variant~\cite{bartels2013bending}.
In both schemes the discrete gradient flow is defined using the discrete $H^2$-inner product, i.\,e., $(y_h, v_h)_* = (\nabla \nabla_h y_h, \nabla \nabla_h v_h)$ and the adaptive time step parameter in Alg.~\ref{alg:unconstrained-flow} is chosen as $\alpha = 0.9$.

\begin{figure}
\centering
\includegraphics[width=0.65\textwidth]{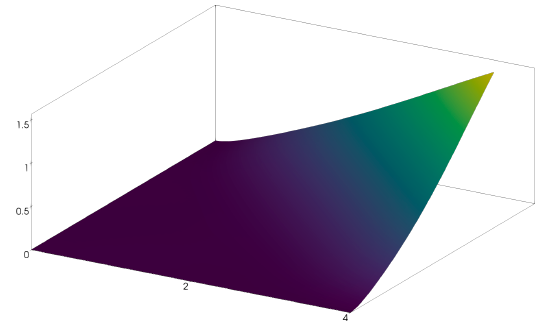}
\caption{Square plate of side length $4$ clamped along the left and bottom edges, subject to a constant vertical load.}
\label{fig:square-plate}
\end{figure}

Table~\ref{tab:plate-bending3} compares the two schemes for several values of the maximum step size~$\tau_{\max}$ at two mesh refinement levels. We report the total iteration counts $N$ and computing times required to reach the stopping criterion $\|d_t y_h^k\|_* \leq 10^{-3}$ for the projection-free scheme and $\|v_h^{k+1}\|_* \leq 10^{-3}$ for Alg.~\ref{alg:unconstrained-flow}, the final discrete energies $E_{\mathrm{bend},h}[y_h^\infty]$, and the discrete isometry errors $\delta_1[y_h] = \|\mathcal{I}_h(\nabla y_h^\top \nabla y_h - I_2)\|_{L^1(\Omega)}$ together with their experimental orders of convergence. For Alg.~\ref{alg:unconstrained-flow} we additionally report the smallest step size $\tau_{\min}$ taken by the adaptive controller over the whole run, and the number $N_{\mathrm{full}}$ of accepted steps for which the maximum step size $\tau=\tau_{\max}$ was used. For both schemes we observe linear convergence of the isometry defect~$\delta_1[y_h^\infty]$ with respect to~$\tau_{\max}$, which is in agreement with the known estimate for the projection-free scheme~\cite{bartels2013bending}. We find that the unconstrained scheme results in smaller constraint violations while being at least $11$ to $13$ times faster than the projection-free reference for smaller values of $\tau$. The relative gains are expected to grow when the number of degrees of freedom is increased further.

\begin{table}[h]
\centering
\caption{Comparison of Alg.~\ref{alg:unconstrained-flow} adapted to plate bending and the classical projection-free scheme on the square plate problem of Section~\ref{subsec:square-plate} at two grid refinement levels. Projection-free runs with $\tau < 2.441 \times 10^{-4}$ on the finer grid were omitted due to compute time.}
\label{tab:plate-bending3}
\begin{tabular}{rcrrrcccc}
\toprule
\#DOF & $\tau_{\max}$ & $N$ & $N_{\mathrm{full}}$ & time\,(s) & $\delta_1[y_h^\infty]$ & $\mathrm{EOC}_{\delta_1[y_h^\infty]}$ & $\tau_{\min}$ & $E_{\mathrm{bend},h}[y_h^\infty]$ \\
\hline
\multicolumn{9}{c}{Alg.~\ref{alg:unconstrained-flow} adapted to plate bending ($\alpha=0.9$)} \\
\hline
9216 & 7.812e-03 &  4101 &     9 &  44 & 1.405e-02 &    ---    & 1.808e-04 & -1.584e+00 \\
     & 3.906e-03 &  5995 &  1402 &  63 & 8.062e-03 & 8.010e-01 & 2.219e-04 & -1.580e+00 \\
     & 1.953e-03 &  7100 &  3907 &  63 & 4.530e-03 & 8.314e-01 & 4.149e-04 & -1.578e+00 \\
     & 9.766e-04 & 10635 & 10635 &  70 & 2.649e-03 & 7.743e-01 & 9.766e-04 & -1.576e+00 \\
     & 4.883e-04 & 21268 & 21268 & 140 & 1.324e-03 & 1.001e+00 & 4.883e-04 & -1.575e+00 \\
     & 2.441e-04 & 42534 & 42534 & 279 & 6.616e-04 & 1.000e+00 & 2.441e-04 & -1.575e+00 \\
\hline
\multicolumn{9}{c}{classical projection-free scheme} \\
\hline
9216 & 7.812e-03 &  1442 & --- &  202 & 1.456e-02 &    ---    & --- & -1.584e+00 \\
     & 3.906e-03 &  2878 & --- &  403 & 7.245e-03 & 1.007e+00 & --- & -1.579e+00 \\
     & 1.953e-03 &  5751 & --- &  803 & 3.614e-03 & 1.003e+00 & --- & -1.577e+00 \\
     & 9.766e-04 & 11497 & --- & 1604 & 1.805e-03 & 1.002e+00 & --- & -1.575e+00 \\
     & 4.883e-04 & 22988 & --- & 3204 & 9.019e-04 & 1.001e+00 & --- & -1.575e+00 \\
     & 2.441e-04 & 45971 & --- & 6394 & 4.508e-04 & 1.000e+00 & --- & -1.575e+00 \\
\hline
\multicolumn{9}{c}{Alg.~\ref{alg:unconstrained-flow} adapted to plate bending ($\alpha=0.9$)} \\
\hline
36864 & 1.953e-03 &  16987 &   1990 &  980 & 2.759e-03 &    ---    & 1.746e-04 & -1.242e+00 \\
      & 9.766e-04 &  22624 &   4949 & 1312 & 1.447e-03 & 9.305e-01 & 5.474e-05 & -1.241e+00 \\
      & 4.883e-04 &  26282 &  14112 & 1372 & 8.649e-04 & 7.428e-01 & 1.024e-04 & -1.241e+00 \\
      & 2.441e-04 &  39130 &  39130 & 1502 & 4.962e-04 & 8.016e-01 & 2.441e-04 & -1.240e+00 \\
      & 1.221e-04 &  78247 &  78247 & 3000 & 2.480e-04 & 1.000e+00 & 1.221e-04 & -1.240e+00 \\
      & 6.104e-05 & 156481 & 156481 & 6102 & 1.240e-04 & 1.000e+00 & 6.104e-05 & -1.240e+00 \\
\hline
\multicolumn{9}{c}{classical projection-free scheme} \\
\hline
36864 & 1.953e-03 & 5306  & --- &  4959 & 2.569e-03 &    ---    & --- & -1.242e+00 \\
      & 9.766e-04 & 10602 & --- &  9891 & 1.282e-03 & 1.002e+00 & --- & -1.241e+00 \\
      & 4.883e-04 & 21193 & --- & 19750 & 6.407e-04 & 1.001e+00 & --- & -1.240e+00 \\
      & 2.441e-04 & 42375 & --- & 40080 & 3.203e-04 & 1.001e+00 & --- & -1.240e+00 \\
\bottomrule
\end{tabular}
\end{table}

\bibliographystyle{plain}
\bibliography{references}

\end{document}

%% file: data/blowup_level0_combined.tex
\begin{tabular}{lllllll}
\toprule
 $\tau_{\mathrm{max}}/\tau$ & $t_s$ & $\Vert \mathcal{I}_h|u_h^\infty|^2 - 1 \Vert_{L^1}$ & $\Vert \mathcal{I}_h|u_h^\infty|^2 - 1 \Vert_{L^\infty}$ & $N$ & $E[u_h^\infty]$ & wall time (s) \\
\hline
\multicolumn{7}{c}{classical projection-free scheme} \\
\hline
0.007812 & 0.3672 & 3.4548e-01 & 2.0986e+00 & 64 & 8.4048 & 20.8645 \\
0.003906 & 0.3438 & 1.8875e-01 & 2.4083e+00 & 128 & 8.1314 & 42.2095 \\
0.001953 & 0.3125 & 1.0310e-01 & 2.5778e+00 & 256 & 7.7882 & 83.3932 \\
\hline
\multicolumn{7}{c}{Alg.~\ref{alg:unconstrained-flow-heat} with constant step sizes} \\
\hline
0.007812 & 0.3906 & 3.3446e-01 & 4.3342e+00 & 64 & 9.5792 & 3.6242 \\
0.003906 & 0.3398 & 1.8904e-01 & 3.5204e+00 & 128 & 8.5977 & 6.4041 \\
0.001953 & 0.3105 & 1.0350e-01 & 2.7107e+00 & 256 & 7.8534 & 12.0788 \\
\hline
\multicolumn{7}{c}{Alg.~\ref{alg:unconstrained-flow-heat} with $\alpha = 0.5$} \\
\hline
0.007812 & 0.3645 & 3.0235e-01 & 1.5182e+00 & 77 & 7.6518 & 37.1895 \\
0.003906 & 0.3293 & 1.8053e-01 & 1.4507e+00 & 138 & 7.2923 & 13.9539 \\
0.001953 & 0.3075 & 9.8730e-02 & 1.3859e+00 & 264 & 7.0675 & 19.8836 \\
\hline
\multicolumn{7}{c}{Alg.~\ref{alg:unconstrained-flow-heat} with $\alpha = 0.9$} \\
\hline
0.007812 & 0.2838 & 4.6441e-02 & 5.8712e-02 & 692 & 6.5401 & 41.3792 \\
0.003906 & 0.2824 & 3.2924e-02 & 5.8514e-02 & 918 & 6.5047 & 51.3656 \\
0.001953 & 0.2803 & 1.8674e-02 & 5.8142e-02 & 1499 & 6.4690 & 83.1647 \\
\bottomrule
\end{tabular}

%% file: references.bib
@article{akrivis2021higher,
  title={Higher-order linearly implicit full discretization of the {L}andau--{L}ifshitz--{G}ilbert equation},
  author={Akrivis, Georgios and Feischl, Michael and Kov{\'a}cs, Bal{\'a}zs and Lubich, Christian},
  journal={Mathematics of Computation},
  volume={90},
  number={329},
  pages={995--1038},
  year={2021}
}

@article{bartels2022quasi,
  title={Quasi-optimal error estimates for the approximation of stable harmonic maps},
  author={Bartels, S{\"o}ren and Palus, Christian and Wang, Zhangxian},
  journal={arXiv preprint arXiv:2209.11985},
  year={2022}
}

@article{bartels2005stability,
  title={Stability and convergence of finite-element approximation schemes for harmonic maps},
  author={Bartels, S{\"o}ren},
  journal={SIAM Journal on Numerical Analysis},
  volume={43},
  number={1},
  pages={220--238},
  year={2005},
  publisher={SIAM}
}

@article {bartels2013bending,
    author = {Bartels, S{\"o}ren},
     title = {Finite element approximation of large bending isometries},
   journal = {Numer. Math.},
  fjournal = {Numerische Mathematik},
    volume = {124},
      year = {2013},
    number = {3},
     pages = {415--440},
      issn = {0029-599X,0945-3245},
   mrclass = {65N30 (74K20 74S05)},
  mrnumber = {3066035},
       doi = {10.1007/s00211-013-0519-7},
       url = {https://doi.org/10.1007/s00211-013-0519-7}
}

@article{bartels2016projection,
  title={Projection-free approximation of geometrically constrained partial differential equations},
  author={Bartels, S{\"o}ren},
  journal={Mathematics of Computation},
  volume={85},
  number={299},
  pages={1033--1049},
  year={2016}
}

@article{barrett2007convergent,
  title={A convergent and constraint-preserving finite element method for the p-harmonic flow into spheres},
  author={Barrett, John W and Bartels, S{\"o}ren and Feng, Xiaobing and Prohl, Andreas},
  journal={SIAM Journal on Numerical Analysis},
  volume={45},
  number={3},
  pages={905--927},
  year={2007},
  publisher={SIAM}
}

@book{brenner2008mathematical,
  title={The mathematical theory of finite element methods},
  author={Brenner, Susanne C and Scott, L Ridgway},
  year={2008},
  publisher={Springer}
}

@book{bartels2015numerical,
  title={Numerical methods for nonlinear partial differential equations},
  author={Bartels, S{\"o}ren},
  volume={47},
  year={2015},
  publisher={Springer}
}

@article{alouges1997new,
  title={A new algorithm for computing liquid crystal stable configurations: the harmonic mapping case},
  author={Alouges, Fran{\c{c}}ois},
  journal={SIAM Journal on Numerical Analysis},
  volume={34},
  number={5},
  pages={1708--1726},
  year={1997},
  publisher={SIAM}
}

@article{xia2021augmented,
  title={Augmented {L}agrangian preconditioners for the {O}seen--{F}rank model of nematic and cholesteric liquid crystals},
  author={Xia, Jingmin and Farrell, Patrick E and Wechsung, Florian},
  journal={BIT Numerical Mathematics},
  volume={61},
  number={2},
  pages={607--644},
  year={2021},
  publisher={Springer}
}

@article{frank1958liquid,
  title={I. Liquid crystals. On the theory of liquid crystals},
  author={Frank, Frederick C},
  journal={Discussions of the Faraday Society},
  volume={25},
  pages={19--28},
  year={1958},
  publisher={Royal Society of Chemistry}
}

@article{friesecke2002theorem,
  title={A theorem on geometric rigidity and the derivation of nonlinear plate theory from three-dimensional elasticity},
  author={Friesecke, Gero and James, Richard D and M{\"u}ller, Stefan},
  journal={Communications on Pure and Applied Mathematics: A Journal Issued by the Courant Institute of Mathematical Sciences},
  volume={55},
  number={11},
  pages={1461--1506},
  year={2002},
  publisher={Wiley Online Library}
}

@article{gilbert2004phenomenological,
  title={A phenomenological theory of damping in ferromagnetic materials},
  author={Gilbert, Thomas L},
  journal={IEEE Transactions on Magnetics},
  volume={40},
  number={6},
  pages={3443--3449},
  year={2004},
  publisher={IEEE}
}

@article{bartels2024error,
  title={Error analysis for the numerical approximation of the harmonic map heat flow with nodal constraints},
  author={Bartels, S{\"o}ren and Kov{\'a}cs, Bal{\'a}zs and Wang, Zhangxian},
  journal={IMA Journal of Numerical Analysis},
  volume={44},
  number={2},
  pages={633--653},
  year={2024},
  publisher={Oxford University Press}
}

@article{bouck2024projection,
  title={Projection-Free Method for the Full {F}rank-{O}seen Model of Liquid Crystals},
  author={Bouck, Lucas and Nochetto, Ricardo H},
  journal={arXiv preprint arXiv:2405.03145},
  year={2024}
}

@article{nochetto2022gamma,
  title={Gamma-convergent projection-free finite element methods for nematic liquid crystals: The {E}ricksen model},
  author={Nochetto, Ricardo H and Ruggeri, Michele and Yang, Shuo},
  journal={SIAM Journal on Numerical Analysis},
  volume={60},
  number={2},
  pages={856--887},
  year={2022},
  publisher={SIAM}
}

@article{dong2025accelerated,
  title={Accelerated gradient flows for large bending deformations of nonlinear plates},
  author={Dong, Guozhi and Guo, Hailong and Yang, Shuo},
  journal={SIAM Journal on Scientific Computing},
  volume={47},
  number={5},
  pages={A2481--A2505},
  year={2025},
  publisher={SIAM}
}

@article{bartels2022stable,
  title={Stable gradient flow discretizations for simulating bilayer plate bending with isometry and obstacle constraints},
  author={Bartels, S{\"o}ren and Palus, Christian},
  journal={IMA Journal of Numerical Analysis},
  volume={42},
  number={3},
  pages={1903--1928},
  year={2022},
  publisher={Oxford University Press}
}

@article{hu2009saddle,
  title={A saddle point approach to the computation of harmonic maps},
  author={Hu, Qiya and Tai, Xue-Cheng and Winther, Ragnar},
  journal={SIAM Journal on Numerical Analysis},
  volume={47},
  number={2},
  pages={1500--1523},
  year={2009},
  publisher={SIAM}
}

@article{kraus2019iterative,
  title={Iterative solution and preconditioning for the tangent plane scheme in computational micromagnetics},
  author={Kraus, Johannes and Pfeiler, Carl-Martin and Praetorius, Dirk and Ruggeri, Michele and Stiftner, Bernhard},
  journal={Journal of Computational Physics},
  volume={398},
  pages={108866},
  year={2019},
  publisher={Elsevier}
}

@article{lopez2011frustrated,
  title={Frustrated nematic order in spherical geometries},
  author={Lopez-Leon, Teresa and Koning, Vinzenz and Devaiah, KBS and Vitelli, Vincenzo and Fernandez-Nieves, Alberto},
  journal={Nature Physics},
  volume={7},
  number={5},
  pages={391--394},
  year={2011},
  publisher={Nature Publishing Group UK London}
}

@article{fernandez2007novel,
  title={Novel defect structures in nematic liquid crystal shells},
  author={Fern{\'a}ndez-Nieves, Alberto and Vitelli, Vincenzo and Utada, Andrew S and Link, Darren R and M{\'a}rquez, Manuel and Nelson, David R and Weitz, David A},
  journal={Physical Review Letters},
  volume={99},
  number={15},
  pages={157801},
  year={2007},
  publisher={APS}
}

@article{chang1992finite,
  title={Finite-time blow-up of the heat flow of harmonic maps from surfaces},
  author={Chang, Kung-Ching and Ding, Wei Yue and Ye, Rugang},
  journal={Journal of Differential Geometry},
  volume={36},
  number={2},
  pages={507--515},
  year={1992}
}

@book{braess2007finite,
  title     = {Finite Elements: Theory, Fast Solvers, and Applications in Solid Mechanics},
  author    = {Braess, Dietrich},
  year      = {2007},
  edition   = {3},
  publisher = {Cambridge University Press},
  address   = {Cambridge},
  doi       = {10.1017/CBO9780511618635},
  isbn      = {978-0-521-70518-9}
}

@phdthesis{palus2024finite,
  title       = {Finite element simulation of a nonlinear bending model for nematic
                 liquid crystal elastomer plates and related geometrically
                 constrained problems},
  author      = {Palus, Christian},
  year        = {2024},
  school      = {Albert-Ludwigs-Universit{\"a}t Freiburg},
  type        = {Ph{D} dissertation},
  doi         = {10.6094/UNIFR/259095},
  url         = {https://doi.org/10.6094/UNIFR/259095}
}

@techreport{schoberl2014c++,
  title={C++ 11 implementation of finite elements in {NGS}olve},
  author={Sch{\"o}berl, Joachim},
  institution={Institute for Analysis and Scientific Computing, Vienna University of Technology},
  number={30},
  year={2014}
}
